\title[Right ideals  and indecomposable permutations]{
Number of right ideals  and a $q$-analogue of indecomposable permutations}
\author{Roland Bacher}
\author{Christophe Reutenauer}
\address{Roland Bacher\\ Univ. Grenoble Alpes\\Institut Fourier (CNRS UMR 5582) \\ 100 rue des Maths\\F-38000 Grenoble\\France}\email{roland.bacher@fourier.ujf-grenoble.fr}
\address{Christophe Reutenauer\\D\'epartement de Math\'ematiques  UQAM\\Case Postale 8888 Succ. Centre-ville\\Montr\'eal (Qu\'ebec) H3C 3P8 Canada} \email{Reutenauer.Christophe@uqam.ca}
\thanks{}
\keywords{}
\subjclass[2000]{}
\newtheorem{theorem}{Theorem}
\newtheorem{lemma}{Lemma}
\newfont{\Bbbb}{msbm10 at 12pt}
\date{\today}
\begin{document}

\maketitle

{\bf Abstract.}
We prove that the number of right ideals of codimension $n$ in 
the algebra of noncommutative Laurent polynomials in two variables 
over the finite field $\mathbb F_q$ is equal to 
$(q-1)^{n+1} q^{\frac{(n+1)(n-2)}{2}}\sum_\theta q^{inv(\theta)}$, where the 
sum is over all indecomposable permutations in $S_{n+1}$ and where $inv(\theta)$
stands for the number of inversions of $\theta$.
\vskip1cm
Mathematics Subject Classification Numbers: Primary: 05A15, Secondary: 05A19

\eject

\section{Introduction} \label{introduction}
Indecomposable permutations 
and subgroups of finite index of the free group $F_2$ are equinumerous.  More precisely, the latter number was computed by Hall \cite{H} and the former by Comtet \cite{Cm}, and it turns out that the number of subgroups of index $n$ is equal to the number of indecomposable permutations in $S_{n+1}$.To our best knowledge, this was first noticed by Dress and Franz \cite{DF1} who gave a bijective proof. Sillke \cite{Si}, Ossona de Mendez and Rosenstiehl \cite{OR}, and Cori \cite{Cr} discovered more bijections. We give a further bijection based on a natural 
correspondence between
subgroups of finite index and regular right congruences of free monoids
during the proof of our main result, Theorem \ref{bijection}. 

Note that subgroups of the free group $F_2$ are also naturally in bijection with rooted hypermaps, see \cite{Cr}. Indecomposable permutations appear also in the study of planar maps, see \cite{B}. Furthermore, they form a free generating set of the bialgebra of permutations, which is therefore a free associative algebra \cite{PR} and they index a basis of its primitive elements \cite{AS}. More elementary is the folklore result that the disjoint union of all permutations groups is a free monoid under shifted concatenation, freely generated by the set of indecomposable permutations (a proof may be found in \cite{P}. This yields a generating function (see \cite{C}) allowing to 
count indecomposable permutations easily.

Our main result is a linear version of the bijections mentioned above: We consider the polynomial $P_{n+1}(q)=\sum_{\theta\in \mathrm{Indec}_{n+1}} q^{inv(\theta)}$ enumerating indecomposable permutations by inversions. We show that this polynomial, corrected by the factor $(q-1)^{n+1} q^{\frac{(n+1)(n-2)}{2}}$, 
enumerates right ideals of finite index $n$ in the group algebra of the free group $\langle a,b\rangle$ on two generators over the finite field $\mathbb F_q$ (Theorem \ref{mainr}).

A key ingredient of our proof is a particular case of a result due to Haglund \cite{H}: The number of invertible matrices over $\mathbb F_q$ with support included in a fixed partition is given by a polynomial which counts, essentially by inversions, the number of 
permutation-matrices with the same support property. These polynomials are rook polynomials \cite{GR}:
They count the number of nonattacking positions of rooks on a chess board. 

A last ingredient of the proof is a study of prefix-free sets and prefix-closed sets with respect to the alphabetical order in the free monoid $\{a,b\}^*$ (or equivalently in binary trees). In particular, our Lemma \ref{numbers},
a somewhat technical formula linking different parameters, seems to be new.

This article is a sequel to \cite{BR}, where we have shown that the number of right ideals of index $n$ of the free associative algebra on two generators over $\mathbb F_q$ is a classical $q$-analogue of Catalan numbers, a result which was already implicit in Reineke's article \cite{R}. We use here several results of our 
previous paper: A
description by prefix-free and prefix-closed sets of right ideals in the free associative algebra, based on
the fact that this algebra is a fir (free ideal ring), in the sense of Cohn \cite{C}, together with a noncommutative version of Buchberger's algorithm for the 
construction of Gr\"obner bases.

{\it Acknowledgments}: A discussion with Alejandro Morales helped the second author to understand Haglund's theorem and rook polynomials, together with the variant given by him and his co-authors in \cite{BIMPSZ}.

\section{Main result} \label{main}

A permutation $\sigma \in S_n$ of the set 
$\{1,\dots,n\}$ is {\em decomposable} if $\sigma(\{1,\dots,i\})=\{1,\dots,i\}$ (and $\sigma(\{i+1,\ldots,n\})= \{i+1,\ldots,n\}$) for
some $i$ in $\{1,\dots,n-1\}$. A permutation $\sigma$ is {\em indecomposable}
otherwise, ie. if $\sigma(\{1,\dots,i\})\not=\{1,\dots,i\}$ 
for all $i$ in $\{1,2,\dots,n-1\}$.

An \emph{inversion} of a permutation $\sigma\in S_n$ is
a pair of distinct integers $i,j$ with $1\leq i<j\leq n$ 
such that  
$\sigma(i)>\sigma(j)$. We write $\mathop{inv}(\sigma)$
for the number of inversions of $\sigma$.

We denote by $K\langle a,b,
a^{-1},b^{-1}\rangle$ the ring of noncommutative Laurent polynomials in noncommuting variables 
$a,b$ over a field $K$. Equivalently, $K\langle a,b,
a^{-1},b^{-1}\rangle$ is the $K$-algebra of the free group generated by $a,b$.

\begin{theorem}\label{mainr}
Given a finite field $\mathbb F_q$, the number of right ideals having codimension $n$ of the $\mathbb F_q$-algebra $\mathbb F_q\langle a,b,a^{-1},b^{-1}\rangle$ 
of noncommutative Laurent polynomials in two free noncommuting generators $a,b$ is equal to 
\begin{eqnarray}\label{formulemainr}
(q-1)^{n+1} q^{\frac{(n+1)(n-2)}{2}}\sum_{\theta\in \mathrm{Indec}_{n+1}} q^{inv(\theta)},
\end{eqnarray}
with $\hbox{Indec}_{n+1}$ denoting
the subset of all indecomposable permutations in $S_{n+1}$. 
Equivalently,
formula (\ref{formulemainr}) is also given by 
\begin{eqnarray}\label{formulemainreq}
\left(\frac{q-1}{q}\right)^{n+1}\sum_{\theta\in \mathrm{Indec}_{n+1}} q^{p(\theta)},
\end{eqnarray}
where $p(\theta)$ is equal to the cardinality of the set $\{(i,j), 1\leq i,j\leq n+1, j<\theta(i)$ or $i>\theta^{-1}(j)\}$.
\end{theorem}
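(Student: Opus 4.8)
The plan is to reduce the enumeration to a weighted count of combinatorial shapes, where each shape is weighted by a number of admissible coefficient systems that Haglund's theorem evaluates as an inversion polynomial.

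First I would set up the reduction theory. Since $\mathbb F_q\langle a,b,a^{-1},b^{-1}\rangle$ is the group algebra of $F_2$ and is a fir, I would fix a monomial order on the reduced words of $F_2$ (the vertices of the $4$-regular Cayley tree) and run the noncommutative Buchberger algorithm of \cite{BR} to attach to each codimension-$n$ right ideal $I$ a reduced Gr\"obner basis. Its set of standard words $T$ (reduced words not right-divisible by any leading term) is prefix-closed, contains the empty word, and has exactly $n$ elements; it is the monomial basis of the quotient module $\mathbb F_q\langle a,b,a^{-1},b^{-1}\rangle / I$. The ideal is then recorded by the rewriting rules sending each minimal non-standard word to its normal form, a linear combination over $\mathbb F_q$ of elements of $T$. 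Invertibility of $a,b$ forces these rewriting data to satisfy confluence constraints that take the shape of invertibility conditions on certain coefficient matrices; I would isolate these as the only conditions, so that for a fixed shape $T$ the right ideals with standard set $T$ are in bijection with the admissible coefficient systems.

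Next I would carry out the combinatorics of shapes. Using the prefix-free/prefix-closed analysis, and in particular Lemma \ref{numbers}, I would encode each shape $T$ (a size-$n$ prefix-closed subtree) together with the discrete part of its rewriting data as a permutation, and show via the bijection of Theorem \ref{bijection} that these permutations are exactly the indecomposable ones in $S_{n+1}$, each arising once. The shift from $n$ to $n+1$ is the usual basepoint coming from the empty word (the image of $1$), mirroring the classical correspondence between index-$n$ subgroups of $F_2$ and $\mathrm{Indec}_{n+1}$.

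The crux is the coefficient count. For a fixed shape I would show that the confluence constraints amount precisely to requiring that a matrix over $\mathbb F_q$ have its support contained in a Ferrers board determined by $T$ (equivalently by the associated $\theta$), and be invertible. Haglund's theorem \cite{H} then evaluates the number of such invertible matrices as a rook polynomial: up to the factor $(q-1)^{n+1}q^{\frac{(n+1)(n-2)}{2}}$, which is uniform over all shapes and absorbs the free entries, the diagonal scalings, and the constant offset, it equals $\sum q^{inv(\sigma)}$ over the permutation matrices supported on that board, that is $q^{inv(\theta)}$ for the corresponding indecomposable $\theta$. Summing over shapes and using that the boards partition $\mathrm{Indec}_{n+1}$ yields formula (\ref{formulemainr}). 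I expect this identification of the confluence conditions with a single partition-support condition, together with the attendant exponent bookkeeping, to be the main obstacle. Finally, the equivalence of (\ref{formulemainr}) and (\ref{formulemainreq}) is the identity $p(\theta)=inv(\theta)+\binom{n+1}{2}$ for $\theta\in S_{n+1}$, which I would verify by inclusion--exclusion over ordered pairs $(i,j)$: the sets $\{j<\theta(i)\}$ and $\{i>\theta^{-1}(j)\}$ each have $\binom{n+1}{2}$ elements, while their intersection counts the non-inversions of $\theta^{-1}$, namely $\binom{n+1}{2}-inv(\theta)$, so that $p(\theta)=inv(\theta)+\binom{n+1}{2}$; substituting this turns the prefactor $(q-1)^{n+1}q^{\frac{(n+1)(n-2)}{2}}$ into $\left(\frac{q-1}{q}\right)^{n+1}$.
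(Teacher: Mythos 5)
You have reproduced the paper's strategy in outline: reduce via the fir/Gr\"obner description of \cite{BR} to triplets $(C,P,(\alpha_{c,p}))$, impose invertibility of the action matrices, count with Haglund's theorem (note: reference \cite{Hg}, not \cite{H}, which is Hall), and match shapes-plus-boards with indecomposable permutations through Theorem \ref{bijection}. Your closing inclusion--exclusion verification of $p(\theta)=inv(\theta)+\binom{n+1}{2}$ is correct and is exactly the content of Section \ref{hook}. Two steps of your plan, however, are wrong or unsupported as written.

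First, the paper never runs Gr\"obner theory inside the Laurent algebra on reduced words of $F_2$. It intersects a codimension-$n$ right ideal with $\mathbb F_q\langle a,b\rangle$ and shows this gives a bijection onto the codimension-$n$ right ideals of the free associative algebra on whose quotient both $a$ and $b$ act invertibly. All shapes therefore live in the binary tree $\{a,b\}^*$, and this is essential: every tool you invoke afterwards --- the $C_a,C_b,P_a,P_b$ splitting, Lemma \ref{numbers}, Theorem \ref{bijection} --- is a binary-tree statement and does not apply to prefix-closed sets of the $4$-regular Cayley tree, so your first paragraph derails the rest of your own plan. Moreover, the confluence constraints you expect from $aa^{-1}=1$ are precisely what the intersection trick eliminates: in $\mathbb F_q\langle a,b\rangle$ \emph{every} coefficient system $(\alpha_{c,p})$ yields a right ideal, with no confluence condition at all; the only constraints are the invertibility of $\mu(a)$ and $\mu(b)$. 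Second, the crux is not a single Ferrers-board condition with a shape-uniform prefactor. Invertibility of $\mu(a)$ forces a block-companion structure contributing $(q-1)^{k(C)}q^{N(C)}$, while $\mu(b)$ contributes $q^{M(C)}H_{\lambda(C)}(q)$, whose $(q-1)$-power is $(q-1)^{n+1-k(C)}$; only the $(q-1)$-factors combine uniformly to $(q-1)^{n+1}$. The $q$-exponents $N(C)+M(C)$ genuinely depend on the shape, and Haglund's theorem counts in the hook statistic $p(\sigma)$ of the small permutation $\sigma\in S_{n+1-k}$ supported on the board, not in $inv(\theta)$. The identity $N(C)+M(C)+p(\sigma)=p(\theta)-(n+1)$, which is what makes the sum over shapes collapse to (\ref{formulemainreq}), is exactly where Lemma \ref{numbers} and the left-to-right-maxima hook computation (Lemma \ref{ptheta}, yielding Formula (\ref{formulaptheta})) are consumed. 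Your assertion that the prefactor ``is uniform over all shapes and absorbs the free entries, the diagonal scalings, and the constant offset'' is therefore the statement to be proved, not something Haglund's theorem delivers; you flagged the bookkeeping as the main obstacle, but as proposed you have no mechanism for it, and this telescoping --- together with the two-matrix (not one-matrix) analysis feeding it --- is the actual content of the proof.
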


The first polynomials $P_{n+1}(q)=\sum_{\theta\in \mathrm{Indec}_{n+1}} q^{inv(\theta)}$ corresponding to $n=0,1,2,3$ are $1$, $q$, $q^3+2q^2$ and $q^6+3q^5+5q^4+4q^3$.
 
 The generating series of all these polynomials is easy to compute as follows: 
The shifted concatenation $\alpha\cdot \beta
\in S_{m+n}$ of two permutations $\alpha\in S_m$ and $\beta\in S_n$
is the permutation of $\{1,\dots,n+m\}$ defined by
$\alpha\cdot \beta(i)=\alpha(i)$ if $i\in \{1,\dots,m\}$ and 
by $\alpha\cdot \beta(i)=m+\beta(i-m)$ if $i\in \{m+1,\dots,m+n\}$.
The disjoint union $\mathcal M=\cup_{n\in\mathbb N}S_n$ (with $S_0$ 
acting on the empty set), endowed with shifted concatenation,
is the free (noncommutative) monoid generated by the 
set $\cup_{n=1}^\infty \mathrm{Indec}_n$ of all indecomposable permutations, see \cite{Cm}.
Since $\mathop{inv}(\alpha\cdot \beta)=\mathop{inv}(\alpha)+\mathop{inv}(\beta)$, the map $\mathop{inv}:\mathcal M\longmapsto \mathbb N$ defines a 
morphism from the monoid $\mathcal M$ onto the additive monoid $\mathbb N$.
Freeness of $\mathcal M$ over $\cup_{n=1}^\infty \mathrm{Indec}_n$
implies the formula
\begin{eqnarray*}
\sum_{n\in\mathbb N}t^n\sum_{\sigma\in S_n}q^{\mathop{inv}(\sigma)}&=&
\sum_{k=0}^\infty\left(\sum_{n=1}^\infty t^n\sum_{\sigma\in \mathrm{Indec}_n}
q^{\mathop{inv}(\sigma)}\right)^k\\
&=&\frac{1}{1-\sum_{n=1}^\infty t^n\sum_{\sigma\in \mathrm{Indec}_n}
q^{\mathop{inv}(\sigma)}}
\end{eqnarray*}
for the generating series of the number of permutations with a given 
number of inversions. An easy induction yields the well-known identity
$$\sum_{\sigma\in S_n}q^{\mathop{inv}(\sigma)}
=(1)(1+q)\cdots (1+q+\cdots +q^{n-1})=(q-1)^{-n}\prod_{k=1}^n(q^k-1)$$
where the right-hand side involves
the classical $q-$analogue of the factorial function. We have therefore
$$
\sum_{n\geq 0} (1)(1+q)\cdots(1+q+\cdots+q^{n-1})t^n=(1-\sum_{k\geq 1}P_k(q)t^k)^{-1}.
$$

\section{Inversions and hooks}\label{hook}

We represent a permutation $\sigma\in S_n$ by its permutation matrix, with a 1 in row  $i$ and column $\sigma(i)$, for each $i, 1\leq i \leq n$, and 
with
0's elsewhere. (With this convention, well-suited to the group-structure
of $S_n$, a permutation matrix $M_\sigma$ acts by right-multiplication 
$v\longmapsto v\cdot M_\sigma$ through the coordinate-permutation
 $v_i\longmapsto v_{\sigma(i)}$ on a row-vector $v=(v_1,\dots,v_n)$.)
A {\em hook} of a coefficient 
$1$ in such a matrix is the set 
of 0's which are located either on the same row at its left or
on the same column and below it. In other words, the hook associated to
$(i,\sigma(i))$
is the set of entries with coordinates $(i,j), j<\sigma(i)$, or $(k,\sigma(i)), k>i$.
Since the permutation matrix of $\sigma^{-1}$ is obtained by transposing 
the permutation matrix of $\sigma$, the cardinality of the union of all hooks in the permutation matrix of $\sigma$ is equal to the number 
$p(\sigma)$ introduced in Theorem \ref{mainr}. Hooks are in general not disjoint. They can be enumerated as follows:
An inversion of $\sigma$ gives rise to a pair of 1's (in the permutation matrix of $\sigma$)
which are incomparable with respect to the order on entries induced by the natural order of $\mathbb N\times \mathbb N$ (ie. one of the coefficients $1$ has
a higher row-index, the other a higher column-index). 
A {\em version} of $\sigma$ is a pair $i<j$ of distinct integers such that 
$\sigma(i)<\sigma(j)$. Equivalently, a version corresponds to a pair of 1's (in the permutation matrix of $\sigma$) whose entries are comparable for the natural order on
$\mathbb N\times \mathbb N$. Denote by $inv(\sigma)$ and by $v(\sigma)$ the number of inversions and of versions of $\sigma$. 
We have
\begin{eqnarray}\label{eqpsigma}
&p(\sigma)=2inv(\sigma)+v(\sigma)=inv(\sigma)+\binom{n}{2}.&
\end{eqnarray}
The first equality follows from Figure \ref{hooks}, which shows that each inversion increases $p(\sigma)$ by $2$ (left part of Figure \ref{hooks}) whereas  a version adds only $1$ corresponding to a unique $0$ contained 
in both hooks (right part of Figure \ref{hooks}).
The observation that the sum $inv(\sigma)+v(\sigma)$ is equal to the number $\binom{n}{2}$ of all 2-subsets in $\{1,\dots,n\}$ shows the second equality.

\begin{figure}
$$
\begin{array}{cccccccccccccccccccccccccccccccccccccc}
* & \cdots & 1 &&&&&&&&&&&&&& 1\\
\vdots & &\vdots &&&&&&&&&&&&&& \vdots \\
1 &\cdots & *  &&&&&&&&&&&&&& * & \cdots & 1
\end{array}
$$
\caption{}\label{hooks}
\end{figure}

Rewriting the equality $p(\theta)=inv(\theta)+\binom{n+1}{2}$ as $p(\theta)-n-1=inv(\theta)+\frac{(n+1)(n-2)}{2}$ we deduce equality of Formulae
(\ref{formulemainr}) and (\ref{formulemainreq}) in Theorem \ref{main} from (\ref{eqpsigma}) and from the trivial identity $\frac{(n+1)(n-2)}{2}+n+1=\frac{n(n+1)}{2}$.

\section{A theorem of Haglund}\label{thHag}
Let $\lambda=(\lambda_1,\ldots, \lambda_l)$ be a partition, with $0\leq\lambda_1\leq\ldots\leq\lambda_n\leq n$. We associate to it the subset $E_\lambda$ of $[n]\times[n]$ defined by $E_\lambda=\cup_{1\leq i\leq n}\{i\}\times[\lambda_i]$, where $[k]=\{1,\dots,k\}$.
The following result is a particular case of Theorem 1 in \cite{Hg}.
\begin{theorem}\label{hag}
The number of $n\times n$ invertible matrices over $\mathbb F_q$ whose nonzero entries lie in $E_\lambda$ is equal to $(q-1)^{n}\sum_\sigma q^{p(\sigma)}$, where the sum is over all permutations in $S_n$ whose matrices have nonzero entries only in $E_\lambda$.
\end{theorem}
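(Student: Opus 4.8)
The plan is to prove Theorem \ref{hag} by counting invertible matrices with support in $E_\lambda$ over $\mathbb F_q$ via a Gaussian-elimination / column-space argument and organizing the count by the ``pivot pattern,'' which will turn out to be indexed by permutations supported in $E_\lambda$, weighted by $q^{p(\sigma)}$. First I would recall that a matrix $M$ with entries in $E_\lambda$ has its nonzero entries constrained to a staircase (Young-diagram) shape: row $i$ may only have nonzero entries in columns $1,\dots,\lambda_i$. Since the rows are nested by the partition condition, one can put $M$ into a canonical form by performing row operations from bottom to top, using the fact that reducing a row only involves columns already available to the rows below it, so the support condition is preserved throughout elimination.

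The key combinatorial step is to stratify the set of invertible matrices by the positions of their pivots. For each invertible $M$ there is a unique permutation $\sigma$ recording the pivot positions: reading the row-reduced form, the pivot in row $i$ sits in some column $\sigma(i)$, and invertibility forces $\sigma$ to be a genuine permutation whose support $(i,\sigma(i))$ lies in $E_\lambda$. I would then count, for each fixed $\sigma$, how many invertible matrices $M$ with support in $E_\lambda$ reduce to that pivot pattern. The count of free parameters is the crux: a pivot entry contributes a nonzero scalar (a factor $q-1$, giving the global $(q-1)^n$), while each non-pivot position that is \emph{forced to be free} by the elimination contributes a factor $q$. The claim to verify is that the number of such free positions equals exactly $p(\sigma)$, the cardinality of $\{(i,j):j<\sigma(i)\text{ or }i>\sigma^{-1}(j)\}$, i.e. the union of all hooks of the permutation matrix of $\sigma$ as described in Section \ref{hook}.

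To identify the exponent with $p(\sigma)$, I would argue locally at each pivot. After choosing the pivot in row $i$ at column $\sigma(i)$, the entries to the left of it in the same row (columns $j<\sigma(i)$, all available since $j<\sigma(i)\le\lambda_i$) and the entries below it in the same column (rows $k>i$ with the column still in their support, equivalently $i>\sigma^{-1}(j)$ read from the transpose) are precisely the degrees of freedom not killed by elimination. Summing these hook contributions over all pivots, and using that overlaps are handled consistently by the bottom-to-top reduction order, yields exactly the hook union counted by $p(\sigma)$. This is where the support hypothesis $E_\lambda$ enters decisively: because $\lambda$ is a partition, every position in a hook of an admissible $\sigma$ automatically lies in $E_\lambda$, so no free parameter is spuriously removed or added.

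The main obstacle I anticipate is the bookkeeping in the stratification: showing that the map $M\mapsto\sigma$ is well defined, that each fiber has exactly $(q-1)q^{p(\sigma)-(\text{pivots counted once})}$ elements — i.e. that the free-parameter count is \emph{uniformly} $p(\sigma)$ independent of the lower-order entries — and that no double counting occurs across different pivot patterns. Equivalently, one must verify that Gaussian elimination restricted to the shape $E_\lambda$ gives a bijection between invertible matrices of support $E_\lambda$ and pairs (permutation $\sigma$ admissible for $E_\lambda$, choice of $n$ nonzero pivot scalars, choice of $p(\sigma)$ free field entries in the hook positions). Since Theorem \ref{hag} is quoted as a special case of Haglund's Theorem 1 in \cite{Hg}, I would either reproduce this elimination argument in detail or, more economically, cite \cite{Hg} directly and simply check that our $E_\lambda$ and our weight $p(\sigma)$ match Haglund's hypotheses and statistic in the partition (staircase) case.
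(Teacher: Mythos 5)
Your proposal is essentially the paper's own proof: the paper likewise proceeds by Gaussian elimination within the staircase shape, taking as pivot the rightmost nonzero entry of the first (shortest) row, clearing the column below it ($q^{n-1}$ free parameters), deleting that row and column, and recursing, with the hook identity $p(\sigma)=(j_1-1)+(n-1)+p(\sigma')$ doing exactly the overlap bookkeeping you defer to the end. The only cosmetic differences are that the paper routes the induction through the closed product formula $H_\lambda(q)=q^{\binom{n}{2}}\prod_{i=1}^n\left(q^{\lambda_i+1-i}-1\right)$ instead of stratifying fiber-by-fiber over the pivot permutation $\sigma$, and that it pivots top-down from the shortest row (which is what your support-preservation remark actually requires) rather than your looser ``bottom to top'' phrasing.
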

We have stated the variant of \cite{BIMPSZ}, the actual formulation in \cite{Hg} is slightly different.

For further use, we denote by $H_\lambda(q)$ the polynomial appearing in
Theorem \ref{hag}. The polynomial $H_\lambda(q)$ is of course zero if
$\lambda_i<i$ for some $i\in \{1,\dots,n\}$.

The polynomial $H_\lambda(q)$ of Haglund's theorem is closely related to rook polynomials as defined and studied in \cite{GR}. Such polynomials are symmetric (self-reciprocal). Observe however that this is not the case for the polynomials appearing in Theorem \ref{mainr}.

We illustrate Haglund's theorem by an example: 
The four possible invertible permutation matrices (corresponding to the permutations
$123, 132,213, 231$)  with non-zero entries in the partition 
$\lambda=(2,3,3)$ (given at the left of the following figure) 
are depicted in the figure below.
$$
\left( 
\begin{array}{ccc}
\times&\times&0\\
\times&\times&\times\\
\times&\times&\times 
\end{array}
\right)
\left( 
\begin{array}{ccc}
1&0&0\\
\bf 0&1&0\\
\bf 0&\bf 0&1 
\end{array}
\right)
\left( 
\begin{array}{ccc}
1&0&0\\
\bf 0&\bf 0&1\\
\bf 0&1&\bf 0 
\end{array}
\right)
\left( 
\begin{array}{ccc}
\bf 0&1&0\\
1&\bf 0&0\\
\bf 0&\bf 0&1  
\end{array}
\right)
\left( 
\begin{array}{ccc}
\bf 0&1&0\\
\bf 0&\bf 0&1 \\
1&\bf 0&\bf 0 
\end{array}
\right)
$$ 
The function $p(\sigma)$ has respectively the values 3,4,4 and 5, as shown by the boldfaced 0's which represent the union of the hooks.
The corresponding polynomial $H_\lambda(q)$ is given by $(q-1)^3q^3(1+q)^2$.

\subsection{Proof of Theorem \ref{hag}}

We present a short proof of Theorem \ref{hag} for the 
sake of self-containedness:

Given a partition $\lambda$ with $n$ parts $\lambda_1\leq \lambda_2\leq \dots\leq \lambda_n=n$
satisfying $\lambda_i\geq i$, we claim that we have
\begin{eqnarray}\label{otherformulaHlambda}
H_\lambda(q)=q^{{n\choose 2}}\prod_{i=1}^n\left(q^{\lambda_i+1-i}-1\right).
\end{eqnarray}

For example, the partition $\lambda=(2,3,3)$ considered above
yields 
$$q^{{3\choose 2}}\left(q^{2+1-1}-1\right)\left(q^{3+1-2}-1\right)\left(q^{3+1-3}-1\right)=q^3(q-1)^3(q+1)^2,$$
as expected.

Formula (\ref{otherformulaHlambda}) is clearly true if $\lambda$ is reduced to a unique part $\lambda_1=1$
where an invertible matrix associated to $\lambda$ is simply a non-zero 
element of $\mathbb F_q$.
Consider now an invertible matrix $A$ compatible with $\lambda$
(and having its coefficients in $\mathbb F_q$). There are $q^{\lambda_1}-1$
possibilities for its first row. Let $j_1$ be the column-index
of the last non-zero coefficient
of the first row. Using the non-zero coefficient of row $1$ and column $j_1$ 
for Gaussian elimination, we can eliminate all 
non-zero coefficients in the remaining rows of the $j_1$-th column 
by subtracting a suitable 
multiple of the first row. All $q^{n-1}$ possibilities
for such an elimination can arise in a suitable invertible matrix $A$. 
Erasing in the resulting matrix the 
first row and the $j_1$-th column, we get an invertible matrix 
$A'$ associated to the partition $\lambda'$ with $n-1$ parts $\lambda_2-1,\dots,
\lambda_n-1$. We have thus by induction 
\begin{eqnarray*}
H_\lambda(q)&=&(q^{\lambda_1}-1)q^{n-1}H_{\lambda'}(q)\\
&=&(q^{\lambda_1}-1)q^{n-1}q^{n-1\choose 2}\prod_{i=2}^n\left(q^{\lambda_i+1-i}-1\right)
\end{eqnarray*}
which simplifies to (\ref{otherformulaHlambda}).

We associate now to the above matrix $A$ the permutation matrix 
with a $1$ in the $j_1$-th column of the first row. The remaining
non-zero coefficients are defined recursively as the permutation matrix
of $A'$ after removal of the first row and the $j_1$-th column.
The hook of the coefficient $1$ in the first column yields a contribution 
of $j_1-1+n-1$ to the number $p(\sigma)$ of the permutation $\sigma$
corresponding to the permutation matrix above. The identity
$p(\sigma)=j_1-1+n-1+p(\sigma')$, another induction on $n$
and a sum over all possibilities for $j_1$
imply now equality between Formula (\ref{otherformulaHlambda})
and the expression $(q-1)^n\sum_{\sigma}q^{p(\sigma)}$
given by Haglund's Theorem.\hfill$\Box$

{\bf Remark.} Formula (\ref{otherformulaHlambda}) is in fact much more
suited for computing  $H_\lambda(q)$ than the expression given in Theorem \ref{hag}.

\section{Prefix-free and prefix-closed sets}\label{sectprefixfreeclosed}

We denote by $A^*$ the free monoid over a finite set $A$.
Elements of $A^*$ are \emph{words}
with letters in the \emph{alphabet} $A$. The 
product of two words $u=u_1\dots u_n$ and $v=v_1\dots v_m$ in $A^*$
is given by the concatenation $uv=u_1\dots u_nv_1\dots v_m$.
The identity element of $A^*$ is the empty word, denoted 
by $1$ in the sequel. We use $a^*=\{a^n,n\geq 0\}$ 
for the set of all powers of 
a letter $a$ in the alphabet $A$.

A word $u$ is a {\em prefix} of a word $w$ if $w=uv$ for some word $v$. A subset $C$ of $A^*$ is {\em prefix-free} if no element of $C$ is 
a proper prefix of another element of $C$. 
A prefix-free set $C$ is {\em maximal} if it is not
contained in a strictly larger prefix-free set. A prefix-free 
set $C$ is maximal if and only if the right ideal $CA^*$ intersects every 
(non-empty) right ideal $I$ of the monoid $A^*$ \footnote{A right ideal 
of a monoid $\mathcal M$ is of course defined in the obvious way
as a subset $I$ of $\mathcal M$ such that $I\mathcal M=I$.}.
Indeed a prefix-free set $C$ giving rise 
to a right ideal $CA^*$ not intersecting a right ideal $I$ of $A^*$ can be 
augmented by adjoining an element of $I$. Conversely, a prefix-free set 
$C$ strictly contained in a prefix-free set $C\cup\{g\}$ 
defines a right ideal $CA^*$ which is disjoint from
the right ideal $gA^*$. Another characterization of maximal prefix-free
sets is given by the fact that a prefix-free set $C$ is maximal if and only
if every element of $A^*\setminus C$ has either an element 
of $C$ as a proper 
prefix or is a proper prefix of an element of $C$.

A subset $P$ of $A^*$ is {\em prefix-closed} if $P$ contains all prefixes of its elements. Equivalently, $P\subset A^*$ is prefix-closed if $u\in P$ 
whenever there exists 
$v\in A^*$ such that $uv\in P$. In particular, 
every non-empty prefix-closed set contains the 
empty word. 

There is a canonical bijection between finite maximal prefix-free sets and finite prefix-closed sets:  The prefix-closed set corresponding to a finite
maximal prefix-free set $C$ is the set $P=A^*\setminus CA^*$ of 
proper prefixes of all words in $C$. The inverse bijection associates to a finite prefix-closed set $P$ the finite maximal prefix-free set $C=PA\setminus P$ if $P$ is nonempty, and $C=\{1\}$ if $P$ is empty.
This bijection has the following graphical interpretation: Prefix-closed sets
have a natural rooted tree-structure: the root is the empty word, ancestors
of a vertex-word are its prefixes. The set $C\cup P$ (with $C$ and $P$ as 
above) is of course prefix-closed and the associated tree has leaves indexed by 
elements of $C$ and interior vertices indexed by elements of $P$. Every interior vertex has
the same number of children indexed by the alphabet $A$. The perhaps empty subtree 
of all interior vertices indexed by $P$ determines (and is uniquely determined by) 
the set of all leaves corresponding to $C$.

Consider now a finite maximal prefix-free set $C\subset \{a,b\}^*$ with associated finite prefix-closed set $P$. For $x\in \{a,b\}$, denote by $C_x\cap \{a,b\}^*x$
the set of words in $C$ terminating with $x$, and denote by $P_x=P\cap(\{1\}\cup\{a,b\}^*x)$ 
the subset of $P$ given by the union of $1$ (provided $P$ is 
non-empty) with
the subset of all non-empty words in $P$ ending with $x$. If $C\not=\{1\}$, we have a bijection $\mu_a$ between $C_a$ and $P_b$ given by $C_a\ni 
w\longmapsto p\in P_b$ where $p$ is the unique element of $P_b$ such that $w\in pa^*$.
The inverse bijection associates to $p\in P_b$ the unique word
$w=pa^*\cap C_a$ of $C_a$. 

Similarly, we have a bijection $\mu_b$ between $C_b$ and $P_a$ if $C\not=\{1\}$. 

Observe that $C$ is given by the disjoint union $C_a\cup C_b$,
except in the trivial case $C=\{1\}$ where we have $C_a=C_b=P=\emptyset$.
Assuming, as always in the sequel, that $C$ is nontrivial, we can
define $\mu: C \rightarrow P$ by using $\mu_a$ on $C_a$ and $\mu_b$ on $C_b$. 
Otherwise stated, $\mu(c)$ is the prefix of $c$ obtained by removing from $c$ its suffix of maximal length equal to a power of its last letter. Equivalently, $\mu(x)$ (for $x\not=1$)
is defined as the shortest proper prefix of $x$ such that $x\in \mu(x)a^*\cup\mu(x)b^*$.

Notice that $\mu(a^\alpha)=\mu(b^\beta)=1$
where $a^\alpha$ and $b^\beta$ are the two unique elements of 
$C$ involving only one letter. Notice also that $\mu$
induces a bijection between $C\setminus\{a^\alpha,b^\beta\}$
and $P\setminus\{1\}$ and that $\mu$ restricted to $C\setminus \{b^\beta\}$ is a bijection onto $P$.

Consider the prefix-free set $C=\{a^2,ab,ba^2,bab,b^2a,b^3\}$, represented by the leaves of a complete binary tree, see Figure \ref{pfset}. Its set of prefixes $P=\{1,a,b,ba,b^2\}$ is the set of internal nodes. One has $P_a=\{1,a,ba\}$, $P_b=\{1,b,b^2\}$, $C_a=\{a^2,ba^2,b^2a\}$ and $C_b=\{ab,bab,b^3\}$. The bijection $\mu_a$ sends $a^2,ba^2,b^2a$ respectively onto $1,b,b^2$ and $\mu_b$ sends $ab,bab,b^3$ respectively onto $a,ba,1$.

\begin{figure}\label{pfset}
\begin{tikzpicture}
[
auto,
level 1/.style={level distance=10mm, sibling distance=20mm},
level 2/.style={level distance=10mm, sibling distance=10mm},
level 3/.style={level distance=10mm, sibling distance=10mm},
level 4/.style={level distance=10mm, sibling distance=6mm},
v/.style={draw=black, text=black, circle, fill, inner sep=1.5},
f/.style={draw=black, text=black, rectangle, inner sep=2},
]

\node[v] (r) {} 
	child[v] { node[v] {} 
				child[sibling distance=6mm] {node[f] {}}
				child[sibling distance=6mm] {node[f] {}}
			}
	child[v] { node[v] {} 
				child[sibling distance=15mm] {node[v] {}
					child[sibling distance=6mm] {node[f] {}
					}
					child[sibling distance=6mm] {node[f] {}
					}
				}
				child[sibling distance=15mm] {node[v] {}
					child[sibling distance=6mm] {node[f] {}}
					child[sibling distance=6mm] {node[f] {}}
				}
			}
;
\end{tikzpicture}
\caption{A prefix-free set}\label{pfset}
\end{figure}
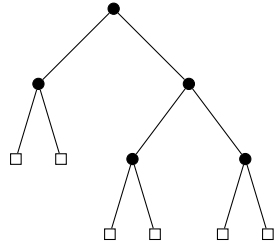

For all these results, see \cite{BR} or \cite{BPR}, where prefix-free sets are called prefix sets\footnote{A prefix-free set, not equal to $\{1\}$, is a {\em code}.}.

\section{Order properties of prefix-free and prefix-closed sets}

Let $A$ be a totally ordered alphabet. The {\em alphabetical} (or {\em lexicographical}) order on the free monoid $A^*$ is the order of the dictionary. Formally, one has $u<v$ if and only if $u$ is a proper prefix of $v$ or if $u=xay,v=xbz$ for some words $x,y,z$ and two distinct letters $a,b$ ordered by $a<b$ of the alphabet $A$.

For any words $u,v,x,y$ we have the following properties:
\begin{itemize}
\item $u<v$ if and only if $xu<xv$;
\item If $u$ is not a prefix of $v$ then $u<v$ implies $ux<vy$.
\end{itemize}

\begin{lemma} \label{ineq}
If $p,q\in\{a,b\}^*$ are two elements with $p\leq q$ lexicographically
then every element of $pa^*=\cup_{n=0}^\infty pa^n$ 
is lexicographically strictly smaller than every element of 
$qbb^*=\cup_{n=1}^\infty qb^n$.
\end{lemma}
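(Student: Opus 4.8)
The plan is to deduce everything from the two order properties displayed just before the lemma: left-invariance ($xu<xv$ if and only if $u<v$) and the fact that a strict inequality $u<v$ in which $u$ is not a prefix of $v$ is preserved under arbitrary right-concatenation ($ux<vy$). Fix $m\ge 0$ and $n\ge 1$; the goal is $pa^m<qb^n$. Since the lexicographic order is total, I first record the trichotomy forced by $p\le q$: a $q$ that were a proper prefix of $p$ would satisfy $q<p$, contradicting $p\le q$, so the hypothesis leaves exactly three possibilities --- (i) $p=q$; (ii) $p$ is a proper prefix of $q$; (iii) $p$ and $q$ first disagree at a common position, where (as $a<b$) $p$ carries the letter $a$ and $q$ the letter $b$.

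Cases (i) and (iii) are immediate. In case (iii), $p$ is not a prefix of $q$ and $p<q$, so the second displayed property, applied with right factors $a^m$ and $b^n$, yields $pa^m<qb^n$ at once. In case (i), left-invariance reduces the claim to $a^m<b^n$, which holds because for $m=0$ the empty word is a proper prefix of $b^n$, while for $m\ge 1$ the two words already differ in their first letter $a<b$.

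The only case requiring work is (ii), where I write $q=pw$ with $w$ nonempty and use left-invariance to reduce the goal to $a^m<wb^n$. The subtlety is that the $a$'s padding $p$ may partially match a leading block of $a$'s in $w$, so I let $j\ge 0$ be the length of the maximal prefix of $w$ consisting of $a$'s. If $m\le j$, then $a^m$ is a prefix of $wb^n$, and in fact a \emph{proper} one, since $|wb^n|=|w|+n\ge j+1>m$; hence $a^m<wb^n$. If $m>j$, I strip the common prefix $a^j$ by left-invariance and compare $a^{m-j}$ with the remaining tail of $wb^n$; that tail begins with $b$ --- either the first non-$a$ letter of $w$, or, when $w=a^j$, the first $b$ of the block $b^n$ --- so the first letters already satisfy $a<b$, giving $a^m<wb^n$ and therefore $pa^m<qb^n$.

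I expect case (ii) to be the sole genuine obstacle, as it is the only place where padding by $a$'s could a priori overtake $q$, and the argument must pin down the leading $a$-run of $w$ and split on whether $m$ exhausts it. The hypothesis $n\ge 1$ is essential precisely in this case: it guarantees a $b$ after the matched $a$-block (this is exactly why the statement involves $qbb^*$ rather than $qb^*$), whereas with $n=0$ a sufficiently long padding $a^m$ would indeed surpass $q$.
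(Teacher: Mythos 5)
Your proof is correct and takes essentially the same route as the paper: split on whether $p$ is a prefix of $q$, dispatch the non-prefix case by the right-concatenation property, and reduce the prefix case via left-invariance to $a^m<wb^n$, which the paper settles in one line by noting that $a^m$ is strictly smaller than any word involving $b$. Your case (ii) analysis of the leading $a$-run of $w$ (and your splitting off of $p=q$, which the paper absorbs into $q=pw$ with $w$ empty) simply spells out that one-line observation in full detail.
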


\begin{proof}
If $p$ is not a prefix of $q$, we are done by the previous observation. If $q=pw$ then $pa^i<pwb^j$ if and only if $a^i<wb^j$. This holds for $j\geq 1$
since $a^i$ is strictly smaller than
any word involving $b$.
\end{proof}

A finite maximal prefix-free set $C$ of $\{a,b\}^*$ defines a complete
finite binary tree $T$ with leaves indexed by $C$ and interior vertices indexed by the associated prefix-closed set $P$. 
An element $v\in C_a$ defines a \emph{left branch} $\mu_a(v)a^*
\cap \left(C\cup P\right)$ of $T$. The natural integer 
$\mathrm{length}(v)-\mathrm{length}(\mu_a(v))$ is the \emph{length}
of the left-branch defined by $v\in C_a$ 
(see also the last paragraph of Section \ref{twisting}).

We shall need the following arithmetic characterization of complete binary trees and associated prefix sets.

\begin{lemma}\label{charactree}
A complete finite binary tree $T$ with leaves indexing a finite maximal prefix-free set $C$ (and interior vertices defining the associated prefix-closed
set $P$) is uniquely determined by the ranks $i_1,\ldots,i_k$ of all
$k$ elements in $C_a$ among the alphabetically ordered elements of $C$
and by the corresponding lengths $l_1,\ldots,l_k$ of the associated
left branches.
\end{lemma}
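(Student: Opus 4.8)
The plan is to reconstruct the tree $T$ (equivalently the prefix-free set $C$) from the data $(i_1,l_1),\dots,(i_k,l_k)$ by a single deterministic left-to-right sweep, and to observe that every choice along the sweep is forced. First I record two preliminary facts. Since $a<b$ and since no leaf of $T$ is a prefix of another, Lemma \ref{ineq} shows that the alphabetical order on $C$ coincides with the left-to-right order of the leaves of $T$; hence the ranks $i_1<\dots<i_k$ tell us exactly which of the $N:=|C|$ leaf-positions carry an element of $C_a$, all remaining positions carrying an element of $C_b$. Second, descending along $a$-edges from any interior vertex reaches, after finitely many steps, a unique leaf, which ends in $a$ and so lies in $C_a$; this shows that the left branches $\mu_a(v)a^*\cap(C\cup P)$, for $v\in C_a$, partition the interior vertices $P$, the branch of $v\in C_a$ contributing exactly $l=\mathrm{length}(v)-\mathrm{length}(\mu_a(v))$ of them. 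Consequently $|P|=\sum_j l_j$ and $N=|P|+1=1+\sum_j l_j$, so the number of leaves and the positions of the elements of $C_b$ are already determined by the data.

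The reconstruction itself scans the leaves in order $r=1,\dots,N$ while maintaining a stack $\Sigma$ of integers, namely the depths of those already-created interior vertices whose $b$-child (right subtree) is still pending. The leftmost leaf is the unique element of $C\cap a^*$, so $i_1=1$ and $c_1=a^{l_1}$; one sets $d_1:=l_1$ and pushes $0,1,\dots,l_1-1$ onto $\Sigma$. For $r\geq2$ the top $w$ of $\Sigma$ is popped; it will be the depth of the deepest common ancestor of $c_{r-1}$ and $c_r$, and $c_r$ enters the pending right subtree of that vertex. If position $r$ carries an element of $C_b$, then $c_r$ is that vertex's $b$-child, a leaf at depth $d_r:=w+1$, and nothing is pushed. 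If position $r$ carries the $j$-th element of $C_a$, then $c_r$ sits at the bottom of a left branch whose top is the $b$-child at depth $w+1$; one sets $d_r:=w+1+l_j$ and pushes $w+1,\dots,w+l_j$. Since at each step the forced subtree is attached at the unique pending right slot, the sweep reconstructs $T$ itself, and is visibly determined, step by step, by the given data alone.

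It remains to justify that each step is forced, i.e. that a genuine tree with the prescribed $C_a/C_b$ pattern and left-branch lengths must look locally as described. The key local fact is that two consecutive leaves have the shape $c_{r-1}=uab^{s}$ and $c_r=uba^{t}$, where $u$ is their deepest common ancestor: $c_{r-1}$ is the rightmost leaf of the $a$-subtree of $u$ and $c_r$ the leftmost leaf of its $b$-subtree. Then $c_r\in C_b$ exactly when $t=0$, so that $c_r=ub$ is itself the $b$-child of $u$; and $c_r\in C_a$ exactly when $t=l_j\geq1$, in which case $\mu_a(c_r)=ub$ is precisely the $b$-child of $u$, at depth $\mathrm{depth}(u)+1$, from which one descends $a^{l_j}$. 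This identifies the popped value $w$ with $\mathrm{depth}(u)$ and pins the attachment depth of $c_r$, so no freedom remains. As the produced leaf-depth sequence $(d_1,\dots,d_N)$ — and with it the tree $T$, hence $C$ and $P$ — is thus determined without choices, uniqueness follows.

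The main obstacle is the local analysis of the previous paragraph: one must show, from the definitions of $\mu_a$ and of a left branch, that the top of an element of $C_a$ is exactly the pending $b$-child sitting one level below the current deepest common ancestor, and that a $C_b$-position consumes that pending slot without creating new ones. The remaining points, both of which follow from the branch/partition bookkeeping of the first paragraph, are the verification of the initialization at the leftmost branch and the check that $\Sigma$ empties precisely after leaf $N$; this last is the Kraft-type consistency condition guaranteeing that the sequence $(d_1,\dots,d_N)$ indeed arises from a complete binary tree.
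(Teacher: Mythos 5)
The paper itself gives no argument for Lemma \ref{charactree} (it is explicitly ``left to the reader''), so there is no official proof to compare against; judged on its own merits, your stack-based left-to-right sweep is correct and complete in outline. Your two structural facts are right and are exactly what is needed: lexicographic order on a prefix-free set agrees with the left-to-right leaf order (though Lemma \ref{ineq} is not quite the right citation --- the relevant fact is the second order property listed just before it, that $u<v$ with $u$ not a prefix of $v$ implies $ux<vy$), and the left branches partition $P$, giving $|C|=1+\sum_j l_j$ so that the total number of leaves and the $C_b$-positions are recoverable from the data. The key local claim, that consecutive leaves have the form $c_{r-1}=uab^s$, $c_r=uba^t$ with $u$ their longest common prefix, is correct because leaves of a subtree form an interval in leaf order; combined with the (implicit but easily verified) stack invariant --- after processing $c_{r-1}$ the pending vertices are exactly the prefixes $p$ of $c_{r-1}$ with $pa$ again a prefix of $c_{r-1}$, in increasing depth, whose deepest member is $u$ since everything after $ua$ in $uab^s$ is a $b$ --- this forces every step, which is all that uniqueness requires. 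Two small remarks: your closing worry about the ``Kraft-type'' condition that $\Sigma$ empties after leaf $N$ is not actually needed, since the lemma asserts only uniqueness, not realizability of arbitrary data $(i_j,l_j)$, and the existence of the tree you are reconstructing already guarantees every pop is legal; and the degenerate case $C=\{1\}$, $k=0$ (single-vertex tree, empty data) should be dispatched separately since your initialization assumes $i_1=1$ exists. With those trivia noted, the argument stands; one can check it against the running example $C=\{a^2,ab,ba^2,bab,b^2a,b^3\}$ with $(i_1,i_2,i_3)=(1,3,5)$ and $(l_1,l_2,l_3)=(2,2,1)$, where the sweep reproduces the tree of Figure \ref{pfset} exactly.
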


The integers $i_1,\dots,i_k$ and $l_1,\dots,l_k$ of Lemma 2
can also be described as follows:
if $C=\{c_1,\ldots,c_{n+1}\}$
with $c_1<c_2<\dots<c_n<c_{n+1}$, then $C_a=\{c_{i_1},\dots,c_{i_k}\}$
and $l_j=\mathrm{length}(c_{i_j})-\mathrm{length}(\mu_a(c_{i_j}))$.
In the running example of Figure \ref{pfset} we have $k=3$, $l_1=2$, $l_2=2$, $l_3=1$, $i_1=1$, $i_2=3$, $i_3=5$. We leave the proof of Lemma \ref{charactree}
to the reader.

We establish for later use the following identity involving some 
numbers associated to prefix sets.

\begin{lemma} \label{numbers} We consider a finite maximal prefix-free set
$C\subset \{a,b\}^*$ with associated prefix-closed set $P$ having 
$n=\vert P\vert=\vert C\vert-1\geq 0$ elements. 
We write 
$$\tilde C_a=C\setminus C_b=\left\lbrace\begin{array}{ll}
C_a\qquad&\hbox{if }C\not=1,\\
\{1\}\qquad&\hbox{if } C=\{1\}
\end{array}\right.$$
for the set of all words in $C$ which do not end with $b$.
We denote by $i_1,\dots,i_k$ the  
ranks in $C=\{c_1,\dots,c_{n+1}\}$ (listed in alphabetical order) of all $k=
\vert\tilde C_a\vert$ elements in 
$\tilde C_a=\{c_{i_1},\dots,c_{i_k}\}\subset C$.
We introduce the set $\mathcal M=\{(p,c)\in (P_b\setminus \{1\}) \times C_b,\ p<c\}$
containing $M=\vert \mathcal M\vert$ elements. Then
$$M+\sum_{h=1}^ki_h=(n+1)(k-1)+k-\frac{k(k-1)}{2}\ .$$
\end{lemma}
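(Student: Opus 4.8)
The plan is to evaluate $\sum_{h=1}^k i_h$ by rank-counting and thereby reduce the asserted identity to a clean bijective statement. Assume $C\neq\{1\}$ (the degenerate case $C=\{1\}$ gives $k=1$, $M=0$, and both sides equal $1$), so that $\tilde C_a=C_a$. Since each $i_h$ is the rank $\mathrm{rank}(c)=1+|\{c'\in C:c'<c\}|$ of an element $c\in C_a$, splitting $C=C_a\cup C_b$ gives
$$\sum_{h=1}^k i_h=k+\sum_{c\in C_a}|\{c'\in C_a:c'<c\}|+\sum_{c\in C_a}|\{c'\in C_b:c'<c\}|=k+\binom{k}{2}+N,$$
where I write $N=|\{(c,c')\in C_a\times C_b:c'<c\}|$. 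Using $|C_a|=|P_b|=k$ (the bijection $\mu_a$) and $|C_b|=n+1-k$, together with $2\binom{k}{2}=k(k-1)$, the target identity becomes equivalent to
$$M+N=(n+1)(k-1)-k(k-1)=(k-1)(n+1-k).$$

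First I would observe that $(k-1)(n+1-k)=|P_b\setminus\{1\}|\cdot|C_b|$ is exactly the number of all pairs $(p,c')\in(P_b\setminus\{1\})\times C_b$. Because $P$ and $C$ are disjoint, $p\neq c'$ for every such pair, so each satisfies either $p<c'$ or $c'<p$; as $M$ counts those with $p<c'$, the displayed identity reduces to
$$N=|\{(p,c')\in(P_b\setminus\{1\})\times C_b:c'<p\}|.$$
It therefore suffices to produce a bijection between $C_a\setminus\{a^\alpha\}$ and $P_b\setminus\{1\}$ that preserves, element by element, the number of words of $C_b$ lying below it.

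The natural candidate is $\mu_a$, a bijection $C_a\to P_b$ sending the lexicographically smallest word $a^\alpha$ to $1$. Since $a^\alpha$ is minimal in $C$, no element of $C_b$ lies below it, so it contributes $0$ to $N$ and $N=\sum_{c\in C_a\setminus\{a^\alpha\}}|\{c'\in C_b:c'<c\}|$. For $c\in C_a\setminus\{a^\alpha\}$ write $p=\mu_a(c)\in P_b\setminus\{1\}$, so that $c=pa^m$ with $m\geq 1$ and $p<c$. The crux is the local comparison: I claim no element of $C_b$ lies strictly between $p$ and $c$, whence $|\{c'\in C_b:c'<c\}|=|\{c'\in C_b:c'<p\}|$. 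Summing over $c$ (equivalently over $p=\mu_a(c)$) then yields the required formula for $N$.

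The main obstacle, and the only genuine computation, is this local comparison, which I expect to settle using the lexicographic properties recorded before Lemma \ref{ineq}. A word $w$ with $p<w<pa^m$ must have $p$ as a prefix: otherwise $p$ and $w$ first differ at a position $\leq|p|$ where $p$ carries the smaller letter, forcing $pa^m<w$, a contradiction. Writing $w=pu$, the condition becomes $1<u<a^m$, and any nonempty $u$ containing a letter $b$ exceeds $a^m$; hence $u=a^j$ with $1\leq j<m$ and $w\in\{pa,\dots,pa^{m-1}\}$. All these words end in the letter $a$, so none lies in $C_b$, which proves the claim and completes the argument.
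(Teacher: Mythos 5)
Your proof is correct. Every step checks out: the rank-count $\sum_h i_h=k+\binom{k}{2}+N$ is right (each $i_h=1+|\{c'\in C:c'<c_{i_h}\}|$, and the $C_a$-internal comparisons contribute exactly $\binom{k}{2}$); the reduction to $M+N=(k-1)(n+1-k)$ is a correct use of $|P_b|=k$, $1\in P_b$, $|C_b|=n+1-k$, and the disjointness $P\cap C=\emptyset$; and your local interval claim --- that every word strictly between $p=\mu_a(c)$ and $c=pa^m$ is of the form $pa^j$ with $1\leq j<m$, hence ends in $a$ and avoids $C_b$ --- is established rigorously, including the case analysis showing $p$ must be a prefix of such a word.

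Your route, however, is genuinely different from the paper's proof, which proceeds by induction on the recursive decomposition $C=aC'\cup bC''$, splitting into the cases $C''=\{1\}$ and $C''\neq\{1\}$, partitioning $\mathcal M$ accordingly (e.g.\ $\mathcal M=a\mathcal M'\cup b\mathcal M''\cup((a(P_b'\setminus\{1\})\cup\{b\})\times bC''_b)$), and tracking how $M$, $\sum_h i_h$, $n$ and $k$ transform under the recursion. Your argument is instead a direct double count: you linearize $\sum_h i_h$, observe that $M$ and $N$ are complementary counts over $(P_b\setminus\{1\})\times C_b$ since every such pair is strictly comparable, and then transport $N$ along $\mu_a$ using the interval property. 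This is close in spirit to the bijective proof the paper only sketches in the remark following the lemma (the map $\varphi:F_>\to E_2\cup E_3\cup E_4$, whose verification is left to the reader): your interval claim, showing $c'<c\iff c'<\mu_a(c)$ for $c'\in C_b$ and $c\in C_a\setminus a^*$, is precisely the fact that makes the paper's $\varphi$ well defined and bijective on the $E_2$-part. What each approach buys: the paper's induction is mechanical and self-contained but opaque about why the identity holds; your version is shorter, conceptually transparent, and supplies in full the lexicographic verification that both the paper's remark and its Lemma \ref{lemproptwistorder} leave implicit.
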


In our running example of Figure \ref{pfset}, we have 
$\mathcal M=\{(b,bab),(b,b^3),(b^2,b^3)\}$ and
$M=3,\ n=5,\ k=3,\ i_1=1,\ i_2=3,
i_3=5$. Hence we get $3+1+3+5=12$ for the left side and 
$6\cdot 2+3-\frac{3\cdot 2}{2}=12$ for the right side.

\begin{proof} For $C=\{1\}$ the set $P$ is empty, 
we have $M=0,\ n=0,\ k=1,\ i_1=1$ 
and both sides of the identity are equal to $1$.

For $C\not=\{1\}$, we apply induction using the partition 
$C=aC'\cup bC''$. We have two subcases given by $n''=0$ 
(corresponding to $C''=\{1\}$) and by $n''>0$ (corresponding
to $C''\not=\{1\}$) using obvious notations.

In the case $n''=0$ we have the partition
$$
\mathcal M=a\mathcal M'\cup (a(P_b'\setminus \{1\})\times\{b\})
$$
(where $a\mathcal M'=\{(ap',ac'),(p',c')\in\mathcal M'\}$)
for the set $\mathcal M$. This shows
$M=\vert \mathcal M\vert=M'+\vert (P_b'\setminus \{1\})\vert$.
Existence of the bijection $\mu_a$ between $C'_a$ and $P'_b$ 
(which exists even if $C'=\{1\}$ since then both sets are empty) 
and the equality
$\vert (P'_b\setminus \{1\})\vert=\vert\tilde C'_a\vert -1=k'-1$
(holding also for $C'=\{1\}$ with our definition of 
$\tilde C'_a$) give $M=M'+k'-1$.
Since $k=k',\ i_h=i_h'$ for $h=\{1,\dots,k\}$ and since $n=n'+1$ we have
by induction
\begin{eqnarray*}
M+\sum_{h=1}^k i_h&=&k-1+M'+\sum_{h=1}^{k'} i'_h\\
&=&k-1+(n'+1)(k'-1)+k'-\frac{k'(k'-1)}{2}\\
&=&(n'+2)(k'-1)+k'-\frac{k'(k'-1)}{2}\\
&=&(n+1)(k-1)+k-\frac{k(k-1)}{2}\\
\end{eqnarray*}
which ends the proof in the case $C''=\{1\}$.

In the remaining case $n''>0$ corresponding to $C''\not=\{1\}$ we have
the partition
\begin{eqnarray*}
\mathcal M&=&a\mathcal M'\cup (a(P'_b\setminus \{1\})\times bC''_b)\cup b\mathcal M''\cup
(\{b\}\times bC''_b)\\
&=&a\mathcal M'\cup b\mathcal M''\cup((a(P_b'\setminus \{1\})\cup\{b\})\times bC''_b)
\end{eqnarray*} 
for $\mathcal M=\{(p,c)\in (P_b\setminus \{1\})\times C_b,\ p<c\}$ where
$a\mathcal M'=\{(ap',ac'),(p',c')\in\mathcal M'\}$ and
$b\mathcal M''=\{(bp'',bc''),(p'',c'')\in\mathcal M''\}$. Existence of the bijection $\mu_a$
between $C'_a$ and $P'_b$ shows $\vert (a(P_b'\setminus \{1\})
\cup\{b\}\vert=k'$.
Existence of the partition $C''=C''_a\cup C''_b$ (since $n''>0$)
together with the identities $\vert C''\vert =n''+1$ and 
$\vert C''_a\vert=k''$
implies $\vert C''_b\vert=n''+1-k''$. 
We have thus the identity
\begin{eqnarray*}
M&=&M'+M''+k'(n''+1-k'')
\end{eqnarray*}
holding also in the case $n'=0$ corresponding
to $C'=\{1\}$.

We have $k=k'+k'',\ i_h=i'_h$ for $h\in\{1,\dots,k'\}$
and $i_h=n'+1+i''_{h-k'}$ for $h\in\{k'+1,\dots, k'+k''\}$.
This implies
\begin{eqnarray*}
\sum_{h=1}^ki_h&=&\sum_{h=1}^{k'}i'_h+\sum_{h=k'+1}^{k'+k''}(n'+1+i''_{h-k'})\\
&=&\sum_{h=1}^{k'}i'_h+(n'+1)k''+\sum_{h=1}^{k''}i''_h
\end{eqnarray*}
and we have thus by induction
\begin{eqnarray*}
M+\sum_{h=1}^k i_h&=&M'+M''+k'(n''+1-k'')+\sum_{h=1}^{k'}i'_h+(n'+1)k''+\sum_{h=1}{k''}i''_h\\
&=&(n'+1)(k'-1)+k'-\frac{k'(k'-1)}{2}+\\
&&\quad +(n''+1)(k''-1)+k''-\frac{k''(k''-1)}{2}+\\
&&\quad+k'(n''+1-k'')+(n'+1)k''\\
\end{eqnarray*}
which simplifies to
\begin{eqnarray*}&&
(n'+n''+1+1)(k'+k''-1)+(k'+k'')-\frac{(k'+k'')(k'+k''-1)}{2}\\
&=&(n+1)(k-1)+k-\frac{k(k-1)}{2}
\end{eqnarray*}
as required.
\end{proof}

{\bf Remark.} Lemma \ref{numbers}, rewritten as
\begin{eqnarray}\label{lem3bij}
(n+1)k=\sum_{h=1}^ki_h+M+(n+1-k)+\frac{k(k-1)}{2}\ ,
\end{eqnarray}
has also the following bijective proof:
The left side of (\ref{lem3bij}) is the cardinality of the set
$C\times \tilde C_a$. The right side 
is the cardinality of the disjoint union
$E=E_1\cup E_2\cup E_3\cup E_4$, where
\begin{eqnarray*}
E_1&=&\{(c_1,c_2)\in C\times \tilde C_a,c_1\leq c_2\},\\
E_2&=&\mathcal M=\{(p,c)\in (P_b\setminus 1)\times C_b,\ p<c\},\\
E_3&=&(C\setminus \tilde C_a)=C_b,\\
E_4&=&\{(c_1,c_2)\in \tilde C_a\times \tilde C_a,c_1<c_2\}.
\end{eqnarray*}
The set $F=C\times \tilde C_a$ can be partionned into
$F=F_{\leq}\cup F_>$ with 
\begin{eqnarray*}
F_{\leq}&=&E_1=\{(c_1,c_2)\in C\times \tilde C_a,c_1\leq c_2\},\\
F_{>}&=&\{(c_1,c_2)\in C\times \tilde C_a,c_1> c_2\}.
\end{eqnarray*}
We leave it to the reader to check that
$\varphi:F_>\longrightarrow E_2\cup E_3\cup E_4$ given by
$$\varphi(c,\gamma)=\left\lbrace\begin{array}{ll}
(\mu_a(\gamma),c)\in E_1\qquad&c\in C_b,\ \gamma\not\in aa^*,\\
c\in E_3&c\in C_b,\ \gamma\in aa^*,\\
(\gamma,c)\in E_4&c\in C_a.
\end{array}\right.
$$
defines a bijective map.

\section{Twisted alphabetical order}

\subsection{Twisting a total order}\label{twisting}
Suppose that we have a set $E$ with a partition $E=\cup_{i \in I} E_i$, where $I$ and each $E_i$ are totally ordered. This gives a natural total order on $E$ 
by setting $x<y$ if either $x$ and $y$ with $x<y$ belong to a common subset $E_i$
or if $x\in E_i$ and $y\in E_j\not=E_i$ with $i<j$.

Call a subset $I$ of an ordered set $E$ an {\em interval} if $a\in I, b\in E, c\in I$ and $a<b<c$ implies $b\in I$. A set $I$ indexing disjoint non-empty intervals $E_i$ partitioning a totally ordered set $E=\cup_{i\in I}E_i$
is naturally ordered as follows: Given two distinct elements $i,j$ of $I$, we set $i<j$ if $x<y$ for some $x\in I_i, y\in I_j$.
Since the sets $E_i$ are intervals, this is a well-defined
total order relation on $I$, independent of the 
chosen representatives $x$ and $y$. 
We use this partition and the previous construction to define the {\em twisted total order $\prec$ (with respect to the partition $\cup_{i\in I} E_i$)}: The restriction of $\prec$ to each $E_i$ is the opposite order of $<$ on $E_i$ and the set $I$ is ordered by $<$.

\medskip
{\bf Remark.}  (i) It is also possible to twist the order on $E=\cup_{i\in I}$
according to the set of indices: $x\tilde\prec y$ if either $x<y$ for $x,y\in E_i$ 
or $x\in E_i,y\in E_j$ with $i>j$. Twisting an order relation on the set of 
indices of a suitable partition amounts however to the ordinary order twist
of the opposite order relation with respect to the same partition.

(ii) Twisted orders can be generalized to arbitrary (not necessarily) 
totally ordered) posets using \emph{admissible} partitions indexed by posets
where a partition $E=\cup E_i$ of a poset $E$ is admissible if 
all elements in any common part $E_i$ have the same sets of upper and lower bounds in $E\setminus E_i$.

\medskip
Consider now $\{a,b\}^*$ with the alphabetical order. We partition
$\{a,b\}^*$ into equivalence classes given by 
$u\sim v$ if $ua^*\cap va^*\not=\emptyset$. Elements in a common equivalence
class
differ thus at most by a final string of $a$'s. Each equivalence class
can be written as
$wa^*$ for a unique word $w$ in $\{a,b\}^*\setminus \{a,b\}^*a=\{1\}\cup \{a,b\}^*b$. More precisely, the equivalence class of an element $w$ is the set $wa^*$ 
if $w\in \{a,b\}^*b\cup\{1\}$ does not end with $a$. 
It is given by $\mu_a(w)a^*$
if $w\in \{a,b\}^*a$ has last letter $a$. 

For further use we mention also the trivial fact that
$u\sim v$ implies either that $u$ is a (not necessarily proper)
prefix of $v$ or that $v$ is a prefix of $u$. 

\begin{lemma}\label{lemequivinterval}
Each equivalence class for $\sim$ is an interval of the 
lexicographically ordered poset $\{a,b\}^*$.
\end{lemma}

\begin{proof}
It is enough to show that $u<v<ua^i$ implies $v=ua^h$
with $h\in\{1,\dots,i-1\}$. The easy verification is left to the reader.
\end{proof}

Lemma \ref{lemequivinterval} shows that we can apply the previous construction.
Thus we obtain the {\em twisted alphabetical order} which we denote
by $\prec$. In summary, $u\prec v$ if and only if either $u,v$ are both in a common equivalence class $wa^*$ and $v<u$, or if they belong to
two different equivalence classes and $u<v$.
Equivalently, $u\prec v$ if either $\mu_a(u)\not=\mu_a(v)$ and $u<v$ or if
$\mu_a(u)=\mu_a(v)$ and $u\in vaa^*$ where $\mu_a$ is extended to all elements
of $\{a,b\}^*$ by setting $\mu_a(w)=w$ if $w\not\in\{a,b\}^*a$ (ie. $\mu_a$
erases always a final maximal (perhaps empty) string of 
consecutive letters $a$ in a word).

The following result summarizes a few properties of the twisted order:

\begin{lemma}\label{lemproptwistorder}
(i) The two order relations $<$ and $\prec$ induce opposite orders on an equivalence class of $\sim$.

(ii) If $u,v$ are not in the same equivalence class of $\sim$, then $u<v$ if and only if $u\prec v$ and this depends only on the equivalence classes of $u$ and $v$.

(iii) The restriction of the map $\mu$ defined in Section \ref{sectprefixfreeclosed} to the set $\{a,b\}^*a$ is order-preserving in the following sense:
For two elements $u,v$ in $\{a,b\}^*a$ with $u<v$ we have 
either $\mu(u)=\mu(v)$ (and they are in a common class $\mu(u)a^*$)
or we can apply (ii) above.  
\end{lemma}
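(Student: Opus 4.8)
The plan is to read all three assertions directly off the construction of $\prec$ given in Section~\ref{twisting}, invoking Lemma~\ref{lemequivinterval} at the single point where genuine content enters. Recall that $\prec$ is the twist of the alphabetical order $<$ relative to the partition of $\{a,b\}^*$ into $\sim$-classes, each of which is an interval by Lemma~\ref{lemequivinterval}; by the defining property of a twist, $\prec$ restricts to the opposite of $<$ on each class $E_i$ and coincides with the induced order on the index set $I$ of classes. Assertion (i) is then exactly this defining property on a single part $E_i$, so it requires no argument beyond quoting the construction (it merely restates the summary description of $\prec$ stated just before the lemma).

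For (ii) I would first record the elementary comparability of disjoint intervals: if $E_i,E_j$ are disjoint intervals of a totally ordered set, then either every element of $E_i$ precedes every element of $E_j$ or conversely. Indeed, if neither held one would find $b\in E_j$ with $b<a\in E_i$ and $d\in E_i$ with $d<c\in E_j$; comparing $b$ with $d$ and using that $E_i,E_j$ are intervals forces either $b\in E_i$ (if $d<b$, since then $d<b<a$) or $d\in E_j$ (if $b<d$, since then $b<d<c$), contradicting disjointness. This is precisely what makes the order on $I$ well defined, as asserted in Section~\ref{twisting}. Consequently, for $u\in E_i,\ v\in E_j$ with $i\neq j$, the relation $u<v$ holds exactly when $E_i$ lies wholly below $E_j$, i.e. when $i<j$ in $I$; and $u\prec v$ holds under the same condition by construction. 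Hence $u<v\iff u\prec v$, and since $i<j$ refers only to the classes, this comparison depends only on the classes of $u$ and $v$, proving (ii).

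For (iii) the key observation is that on words ending in $a$ the map $\mu$ of Section~\ref{sectprefixfreeclosed} coincides with the extended map $\mu_a$: both strip the maximal trailing block of $a$'s, and the last letter of any $u\in\{a,b\}^*a$ is $a$. Since the $\sim$-class of a word $w$ not ending in $a$ is exactly the fibre $\mu_a^{-1}(w)=wa^*$, we obtain for $u,v\in\{a,b\}^*a$ that $\mu(u)=\mu(v)\iff\mu_a(u)=\mu_a(v)\iff u\sim v$. This gives the stated dichotomy: if $\mu(u)=\mu(v)$ then $u,v$ share the class $\mu(u)a^*$ (on which $<$ and $\prec$ are opposite by (i)); otherwise $u\not\sim v$, part (ii) applies to the pair, and from $u<v$ we read off $u\prec v$. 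One may add that in this second case $\mu$ is in fact strictly monotone, $\mu(u)<\mu(v)$, because $\mu(u),\mu(v)$ are representatives of the two classes and $u<v$ forces $E_i<E_j$ as wholes by the interval fact above.

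The only step carrying real, if modest, content is the comparability of disjoint intervals used in (ii); everything else is a matter of matching the definition of $\prec$ and the identification $\mu=\mu_a$ on $\{a,b\}^*a$ against the statements. I expect the main care to be needed not in any computation but in phrasing (iii) so that the two cases $\mu(u)=\mu(v)$ and $\mu(u)\neq\mu(v)$ reduce cleanly to (i) and (ii) respectively.
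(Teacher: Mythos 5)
Your proof is correct; note that the paper itself offers no argument for this lemma (``We leave the easy proof to the reader''), so your write-up simply supplies the intended direct verification. The one step with genuine content, the comparability of disjoint intervals underlying (ii), is exactly the fact the paper already invokes implicitly in Section~\ref{twisting} to define the order on the index set, and your reduction of (iii) to (i)/(ii) via the identification of $\mu$ with $\mu_a$ on $\{a,b\}^*a$ (including the strict monotonicity $\mu(u)<\mu(v)$ across distinct classes) is sound.
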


We leave the easy proof to the reader. \hfill$\Box$

Call {\em left branch} of a subset $L$ of $\{a,b\}^*$ a non-empty 
intersection of $L$ with an equivalence class of $\sim$. 

\subsection{Prefix-free sets and the twisted alphabetical order}
\label{sectpreffreeandtwist}

\begin{lemma}\label{prop}
Let $C$ be a finite maximal prefix-free set in $\{a,b\}^*$ with associated prefix-closed set $P$. Given an element $c$ in $C$ we denote by 
$c'\in C_a$ the largest lower bound of $c$ in $C_a$ (ie. $c'$
is maximal in $C_a$ such that $c'\leq c$). Then $\mu(d)\preceq 
\mu(c')$ (with $\mu$ defined in Section \ref{sectprefixfreeclosed}) 
for every $d$ in $C$ such that $d\leq c$.
\end{lemma}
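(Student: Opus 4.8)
The plan is to reduce everything to the order-preservation property of $\mu$ recorded in Lemma~\ref{lemproptwistorder}(iii), after first noting that $c'$ is well defined. Since the lexicographically smallest element of $C$ is the unique all-$a$ word $a^\alpha\in C_a$ and $a^\alpha\le c$, the elements of $C_a$ below $c$ form a non-empty set, so $c'$ exists. I would then split on whether the word $d\in C$ with $d\le c$ ends in $a$ or in $b$, i.e.\ whether $d\in C_a$ or $d\in C_b$.

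The case $d\in C_a$ is immediate: by maximality of $c'$ among elements of $C_a$ bounded above by $c$ we get $d\le c'$, and since $d,c'\in C_a\subseteq\{a,b\}^*a$, Lemma~\ref{lemproptwistorder}(iii) yields $\mu(d)\preceq\mu(c')$ at once.

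The crux is the case $d\in C_b$. Here $\mu(d)=\mu_b(d)\in P_a$ ends in $a$ (or is $1$), whereas $\mu(c')\in P_b$ ends in $b$ (or is $1$); these lie in different classes of $\sim$ and must be compared through the twist. The device I would use is to pass to the leaf of the left branch carrying $w:=\mu(d)$. Writing $w':=\mu_a(w)\in P_b$ for the longest prefix of $w$ not ending in $a$ (so that $w$ and $w'$ lie in the common class $w'a^*$), I set $e:=\mu_a^{-1}(w')\in C_a$, the unique leaf of that branch. Two facts then close the argument. First, $\mu(e)=w'$, and since $w=w'a^s$ with $w'\le w$ lexicographically while $\prec$ reverses $<$ inside the class $w'a^*$, one gets $\mu(d)=w\preceq w'=\mu(e)$ (equality exactly when $d=b^\beta$). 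Second, $w$ is a prefix of $e=w'a^l$ with $l>s$, so $e=w\,a^{l-s}$ agrees with $d=w\,b^m$ up to the prefix $w$ and then continues with an $a$ against a $b$; hence $e<d\le c$, which forces $e\le c'$. Order-preservation (Lemma~\ref{lemproptwistorder}(iii)) gives $\mu(e)\preceq\mu(c')$, and chaining the two inequalities yields $\mu(d)\preceq\mu(e)\preceq\mu(c')$.

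The main obstacle is precisely this second case: one must recognize that the correct comparison object for a leaf $d$ ending in $b$ is not $d$ itself but the leaf $e\in C_a$ sharing its left branch with the internal node $\mu(d)$. The two key verifications are that the twist forces $\mu(d)\preceq\mu(e)$ (because $\mu(d)$ and $\mu(e)$ lie on one left branch, where $\prec$ inverts the lexicographic order) and that the elementary inequality $a<b$ at the branching position forces $e<d$, hence $e\le c'$. Everything else is bookkeeping about the bijections $\mu_a,\mu_b$ and the prefix-closed/prefix-free tree structure; the degenerate instances ($w=1$, forcing $d=b^\beta$ and $e=a^\alpha$) are absorbed by the same argument.
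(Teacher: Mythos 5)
Your proof is correct and takes essentially the same route as the paper's: your auxiliary leaf $e=\mu_a^{-1}(\mu_a(\mu(d)))\in C_a$ is exactly the element $c''$ (the unique element of $C$ in $\mu(d)a^*$) that the paper introduces, and your chain $\mu(d)\preceq\mu(e)\preceq\mu(c')$ is just a repackaging of the paper's case split $c''=c'$ versus $c''<c'$ via Lemma \ref{lemproptwistorder}. If anything, you make explicit a step the paper leaves implicit, namely that $e<d\leq c$ (by the $a$-versus-$b$ comparison after the common prefix $\mu(d)$, cf.\ Lemma \ref{ineq}), which is what justifies $e\leq c'$ by maximality.
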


An instance of this lemma in our running example is: For $c=bab$ we have $c'=ba^2$. Taking $d=c$ we get $\mu(d)=ba\prec b=\mu(c')$.

\begin{proof}
If $d$ is in $C_a$ this follows from prefix-freeness of $C_a$ and from 
assertion (iii) in Lemma \ref{lemproptwistorder}.

Suppose now that $d\in C_b$. Define $c''\in C_a$ as the unique element of
$C$ in the set $\mu(d)a^*=\mu_b(d)a^*$. If $c''=c'$ then $\mu_a(c')=
\mu_a(\mu(d))\succ \mu(d)$ by definition of $\prec$ on equivalence classes.
If $c''\not=c'$ then $c''<c'$ by maximality of $c'$. The elements
$c''$ and $c'$ define thus two 
different equivalence classes $\mu_a(c'')a^*$ and $\mu_a(c')a^*$ 
and we can apply assertion (ii) of Lemma \ref{lemproptwistorder}.
\end{proof}

\section{Right congruences of a free monoid.}\label{congr}

A {\em right congruence} of a monoid is an equivalence relation $\equiv$ which is compatible with right-multiplication: $u\equiv v$ implies $uw\equiv vw$. Observe that each element in the monoid induces, by right-multiplication, a function from the set of $\equiv$-classes into itself. Equivalently, we get a right action of the monoid on the 
quotient\footnote{A right congruence of a free monoid is essentially the same thing as a deterministic automaton. More precisely, a right congruence of a free monoid
corresponds to an automaton with an initial state but without prescribed set of final states, which is {\em accessible} in the sense that each state can be reached
from the initial state.}.
Recall the well-known bijection between right congruences of finite index in a free  monoid $A^*$ and triplets $(C,P,f)$ where $C$ is a finite maximal prefix-free set with associated prefix-closed set $P$ and where $f:C\rightarrow P$ is a function such that $f(c)\in P$
is alphabetically smaller than $c$ for every $c$ in $C$.
The corresponding congruence is generated by the relations $c\equiv f(c)$ for
$c$ in $C$. The prefix-closed set $P$ is a set of representatives for
the quotient set 
$A^*/\equiv$. The right action of $A^*$ on the quotient is completely defined
as follows: A letter $x$ of the alphabet $A$ acts on $p$ in $P$
by $p.x=px$ 
if  $px$ is in $P$ and by $p.x=f(px)$ otherwise (see for example Proposition 7
of \cite{BR}).

We illustrate this with the right congruence defined by $a^2\equiv 1, ba^2\equiv b,b^2 a\equiv b^2,ab\equiv1,bab\equiv ba,b^3\equiv a$ with $C$ and $P$ as in 
Figure \ref{pfset}. Right-multiplication $w\longmapsto
w.a$ or $w\longmapsto w.b$ by $a$ or $b$ on the set $P=\{1,a,b,ba,b^2\}$ is given by
$$\begin{array}{|c||c|c|c|c|c|}
\hline
w&1&a&b&ba&b^2\\
\hline\hline
w.a&a&1&ba&b&b^2\\
\hline
w.b&b&1&b^2&ba&a\\
\hline\end{array}\ .$$

\section{Left-to-right maxima and indecomposable permutations}\label{sectLRmaxima}

A {\em left-to-right maximum} of a permutation $\sigma\in S_n$ is a value $\sigma(i)=j\in\{1,\ldots,n\}$ such that $\sigma(h)<j$ for $h<i$. We call $i$ the {\em position} and $j$ the {\em value} of the left-to-right
maximum $\sigma(i)$. 
The following result is well-known.

\begin{lemma}\label{left-to-right}
A permutation $\sigma\in S_n$ with successive positions $i_1<\dots <i_k$
of left-to-right maxima is indecomposable (in the sense of Section \ref{main}) if and only if $\sigma(i_j)\geq i_{j+1}$ for $j=1,\ldots,k-1$.
\end{lemma}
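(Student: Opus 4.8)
The plan is to characterize indecomposability of $\sigma$ by analyzing where the initial segments $\{1,\dots,i\}$ can fail to be stable under $\sigma$, and to relate this directly to the left-to-right maxima. Recall that $\sigma$ is decomposable precisely when $\sigma(\{1,\dots,i\})=\{1,\dots,i\}$ for some $i\in\{1,\dots,n-1\}$, equivalently when $\max\{\sigma(1),\dots,\sigma(i)\}=i$ for some such $i$. So the whole question reduces to understanding the running maxima $m_i=\max\{\sigma(1),\dots,\sigma(i)\}$ and when the equality $m_i=i$ can occur. I would first record the elementary fact that $m_i\geq i$ always (since $\sigma(1),\dots,\sigma(i)$ are $i$ distinct values, their maximum is at least $i$), so decomposability is the statement that $m_i=i$ for some $i<n$.

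Next I would connect the running maxima to the positions of left-to-right maxima. By definition, the value $m_i$ equals $\sigma(i_j)$ where $i_j$ is the largest position of a left-to-right maximum that is $\leq i$; in other words, on the block of indices $i_j\leq i<i_{j+1}$ the running maximum $m_i$ is constant and equal to $\sigma(i_j)$. Thus as $i$ increases, the sequence $(m_i)$ is a step function that jumps exactly at the positions $i_1<\dots<i_k$ of left-to-right maxima, taking the value $\sigma(i_j)$ throughout the interval $i\in\{i_j,\dots,i_{j+1}-1\}$ (with $i_{k+1}:=n+1$ by convention). The crucial observation is then that $m_i=i$ can happen for some $i$ in the $j$-th block if and only if $i_j\leq \sigma(i_j)\leq i_{j+1}-1$, i.e.\ if and only if $\sigma(i_j)<i_{j+1}$; and on such a block the equality is achieved exactly at $i=\sigma(i_j)$.

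Assembling these pieces, $\sigma$ is decomposable iff there is some block index $j\in\{1,\dots,k\}$ and some $i<n$ in that block with $m_i=i$, which by the previous paragraph happens iff $\sigma(i_j)<i_{j+1}$ for some $j$ (with $i<n$ guaranteed since a decomposition point lies in $\{1,\dots,n-1\}$). Since $i_1=1$ and $\sigma(i_1)\geq 1=i_1$, and the last block ends at $n$, the relevant obstructions are exactly the conditions $\sigma(i_j)\geq i_{j+1}$ for $j=1,\dots,k-1$: if all of these hold then no block contributes a decomposition point, while if some fails we obtain $\sigma(i_j)<i_{j+1}$ and hence a genuine $i<n$ with $m_i=i$. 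This yields the claimed equivalence: $\sigma$ is indecomposable iff $\sigma(i_j)\geq i_{j+1}$ for $j=1,\dots,k-1$.

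The step I expect to require the most care is the bookkeeping of the boundary cases, namely verifying that the decomposition point $i$ produced when some inequality fails genuinely lies in $\{1,\dots,n-1\}$ (not at $i=n$), and conversely that the last block $j=k$ never produces a spurious decomposition (since the maximum over the whole permutation is $n$, we have $m_i<n$ for $i<n$, so $i=n$ is excluded and the condition on $j=k$ is vacuous). Once one is careful that the running maximum equals the index only strictly inside $\{1,\dots,n-1\}$ under the stated failure, the equivalence follows; the argument is otherwise a direct translation between the ``initial segment stable'' formulation of decomposability and the step-function behaviour of the running maxima.
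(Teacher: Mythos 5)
Your proof is correct and complete; note that the paper itself states Lemma \ref{left-to-right} without proof, labelling it as well-known, so there is no proof in the paper to compare against. Your route --- translating decomposability into the existence of $i\in\{1,\dots,n-1\}$ with $m_i=i$, where $m_i=\max\{\sigma(1),\dots,\sigma(i)\}\geq i$, then observing that $(m_i)$ is a step function constant equal to $\sigma(i_j)$ on each block $i_j\leq i<i_{j+1}$ (with $i_{k+1}:=n+1$), so that the $j$-th block contains a decomposition point if and only if $\sigma(i_j)<i_{j+1}$ --- is the standard argument, and your handling of the boundary cases is exactly right: when the inequality fails for some $j\leq k-1$ the point $i=\sigma(i_j)$ satisfies $i<\sigma(i_k)=n$, while the last block can only yield the excluded value $i=n$ since $m_i=n$ there.
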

Observe that one has always $i_1=1$ and $\sigma(i_k)=n$ with these notations.

For later use, we state and prove the following result, which holds for any permutation expressed as a word $w=a_1\cdots a_n$ involving $n$ distinct letters of a totally
ordered alphabet. We denote by $st(w)=i_1\cdots i_n\in S_n$ 
the associated {\em standard permutation} of $w$ obtained by replacing each letter $a_j$ in $w$ by its rank $i_j$ 
in the totally ordered set $\{a_1,\dots,a_n\}$.
The standard permutation of $w=3649$ for example is given by $st(w)=1324$.

\begin{lemma}\label{ptheta}
Let $\theta\in S_n$ have successive left-to-right maxima in positions $i_1,\ldots,i_k$, with values $j_1,\ldots,j_k$. Let $\sigma=st(w)$, where $w=\sigma(2)\dots\sigma(i_2-1)\sigma(i_2+1)\dots\sigma(i_k-1)\sigma(i_k+1)\dots\sigma(n)$
is obtained from the word $\theta=\sigma(1)\dots\sigma(n)$
by removal of the left-to-right maxima $j_1,\cdots,j_k$. Then  
$$
p(\theta)=p(\sigma)+kn-\frac{k(k+1)}{2}+\sum_{1\leq s\leq k} (j_s-i_s)
$$
where $p(\sigma)$ is the cardinality of the union of all hooks (see Section \ref{hook}).
\end{lemma}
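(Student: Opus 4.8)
The plan is to reduce the claimed identity to a purely combinatorial statement about inversions, using the relation $p(\tau)=inv(\tau)+\binom{m}{2}$ for $\tau\in S_m$ recorded in (\ref{eqpsigma}). Applying this to $\theta\in S_n$ and to $\sigma=st(w)\in S_{n-k}$ (the word $w$ having $n-k$ letters, one for each non-maximum of $\theta$), the target formula becomes
\begin{eqnarray*}
inv(\theta)+\binom{n}{2}=inv(\sigma)+\binom{n-k}{2}+kn-\frac{k(k+1)}{2}+\sum_{1\le s\le k}(j_s-i_s).
\end{eqnarray*}
First I would record the elementary identity $\binom{n}{2}-\binom{n-k}{2}=kn-\frac{k(k+1)}{2}$, which cancels all the purely numerical terms and reduces everything to proving
\begin{eqnarray*}
inv(\theta)-inv(\sigma)=\sum_{1\le s\le k}(j_s-i_s).
\end{eqnarray*}

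Next I would observe that standardization preserves the relative order of entries, so $inv(\sigma)=inv(w)$ equals the number of inversions of $\theta$ both of whose positions avoid the maxima positions $i_1,\dots,i_k$. Hence $inv(\theta)-inv(\sigma)$ counts exactly the inversions of $\theta$ having at least one endpoint at a left-to-right maximum, and I would classify these. No inversion can have two maxima as endpoints, since the values of the left-to-right maxima read left to right are strictly increasing, $j_1<\dots<j_k$. No inversion can have a maximum as its right (larger-position) endpoint either: if $b=i_t$ is a maximum, then by definition every earlier value $\theta(a)$ with $a<i_t$ satisfies $\theta(a)<j_t$, so such a pair $(a,b)$ is a \emph{version} rather than an inversion. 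Therefore the inversions counted by $inv(\theta)-inv(\sigma)$ are precisely those of the form $(i_s,b)$ with $i_s<b$ and $\theta(b)<j_s$.

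It then remains to count, for each fixed maximum at position $i_s$ with value $j_s$, the number of positions $b>i_s$ satisfying $\theta(b)<j_s$. The values strictly below $j_s$ are the $j_s-1$ integers $1,\dots,j_s-1$, and by the defining property of a left-to-right maximum all $i_s-1$ positions $h<i_s$ already carry values $<j_s$; consequently exactly $(j_s-1)-(i_s-1)=j_s-i_s$ of these small values occur to the right of $i_s$. Summing over $s$ yields $\sum_s(j_s-i_s)$ and completes the reduction. The one point demanding care is the classification step: it is precisely the increasing nature of the left-to-right maxima together with their domination of all earlier entries (the same property underlying Lemma \ref{left-to-right}) that forces every inversion meeting a maximum to place that maximum in the left position, and this is what makes the per-column count collapse to the clean value $j_s-i_s$.
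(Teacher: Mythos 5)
Your proposal is correct, but it proves the lemma by a genuinely different route than the paper. The paper works directly on the permutation matrix: it notes that the matrix of $\sigma$ is the submatrix of that of $\theta$ obtained by deleting the rows and columns through the left-to-right maxima, identifies $p(\theta)-p(\sigma)$ with the number of hook cells lying in those deleted rows and columns, and counts these by inclusion--exclusion over the staircase of maxima as $\sum_s(j_s-1)+\sum_s(n-i_s)-(1+2+\cdots+(k-1))$, which is exactly the stated formula. You instead translate $p$ into inversions via the identity $p(\tau)=inv(\tau)+\binom{m}{2}$ of (\ref{eqpsigma}) --- legitimately, since Section \ref{hook} proves it for arbitrary permutations, so it applies both to $\theta\in S_n$ and to $\sigma\in S_{n-k}$ --- verify the binomial identity $\binom{n}{2}-\binom{n-k}{2}=kn-\frac{k(k+1)}{2}$, and reduce the lemma to the standalone fact $inv(\theta)=inv(\sigma)+\sum_s(j_s-i_s)$. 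Your proof of that fact is sound: standardization preserves inversions, so $inv(\sigma)$ counts inversions of $\theta$ avoiding all maxima positions; no inversion joins two maxima (their values increase), none has a maximum as its right endpoint (all earlier values are smaller by definition), and for fixed $s$ the count of positions $b>i_s$ with $\theta(b)<j_s$ is $(j_s-1)-(i_s-1)=j_s-i_s$, since all $i_s-1$ earlier positions carry values below $j_s$. As for what each approach buys: your reduction avoids the matrix picture entirely, and in particular sidesteps the (implicit, slightly delicate) verification in the paper that the hook cells of $\theta$ inside the deleted rows and columns are precisely the cells of the maxima's own hooks; it also isolates a clean classical identity about left-to-right maxima and inversions. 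The paper's count, on the other hand, is self-contained at the level of hooks, does not invoke (\ref{eqpsigma}), and keeps the quantity $p$ --- the statistic actually fed into Haglund's theorem later in the proof of Theorem \ref{mainr} --- in view throughout.
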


As an example, consider $\theta=\underline 32\underline 54\underline 61$, with
underlined left-to-right maxima. We have 
$i_1=1$, $i_2=3$, $i_3=5$, $j_1=3$, $j_2=5$, $j_3=6$, $w=241$, $\sigma=st(w)=231$. The matrices of $\theta$ and $\sigma$ (with hooks represented by 
boldfaced 0's) are
$$
\left(
\begin{array}{ccccccc}
\bf 0&\bf 0&1&0&0&0\\
\bf 0&1&\bf 0&0&0&0\\
\bf 0&\bf 0&\bf 0&\bf0&1&0\\
\bf 0&\bf 0&\bf 0&1&\bf 0&0\\
\bf 0&\bf 0&\bf 0&\bf0&\bf 0&1\\
1&\bf 0&\bf 0&\bf 0&\bf0&\bf 0
\end{array}
\right)
\hbox{ and }
\left(
\begin{array}{ccccccc}
\bf 0&1&0\\
\bf 0&\bf 0&1\\
1&\bf 0&\bf 0
\end{array}
\right).
$$
We see that $p(\theta)=22$, $p(\sigma)=5$, $k=3$, $n=6$. The equality 
$22=5+3\cdot 6-\frac{3\cdot 4}{2}+(3-1)+(5-3)+(6-5)$ illustrates Lemma \ref{ptheta}.

\begin{proof}
The matrix of $\theta$ looks like in the figure below, where the 1's represent the left-to-right maxima, where the $*$'s represent the possible positions of the other 1's, and where the north-eastern region (which is empty on the figure) has only 0's. Observe that the matrix of $\sigma$ is the submatrix obtained by removing the rows and the columns containing all 1's displayed in the figure and corresponding to all left-to-right maxima. 
The definition of $p(\sigma)$ (given in Section \ref{hook}) shows that the difference $p(\theta)-p(\sigma) $ is equal to the number of 0's in the figure. Since the row (resp. column) coordinates of the left-to-right maxima are $i_1,\ldots,i_k$ (resp. $j_1,\ldots,j_k$), this number of 0's is obtained by summing up the  rows of 0's, columns of 0's, and by subtracting the 0's at intersections 
(which have been counted twice). Thus it is
$j_1-1+j_2-1+\cdots+j_k-1+(n-i_1)+(n-i_2)+\cdots+(n-i_k)-(1+2+\cdots+k-1)$
which gives the formula of Lemma \ref{ptheta}.
$$
\begin{array}{cccccccccccccccccc}
0&\cdots&0&1\\
*&\cdots&*&0\\
\vdots&\vdots&\vdots&\vdots\\
*&\cdots&*&0 \\
0&\cdots&0&0&0&\cdots&0&1\\
*&\cdots&*&0 &*&\cdots&*&0\\
\vdots&\vdots&\vdots&\vdots&\vdots&\vdots&\vdots&\vdots\\
*&\cdots&*&0&*&\cdots&*&0\\
0&\cdots&0&0&0&\cdots&0&0&\cdots&\cdots&0&\cdots&0&1\\
*&\cdots&*&0 &*&\cdots&*&0&\cdots&\cdots&*&\cdots&*&0\\
\vdots&\vdots&\vdots&\vdots&\vdots&\vdots&\vdots&\vdots&\cdots&\cdots&\vdots&\vdots&\vdots&\vdots&\\
*&\cdots&*&0&*&\cdots&*&0&\cdots&\cdots&*&\cdots&*&0\\
\end{array}
$$
\end{proof}

\section{Indecomposable permutations and regular right congruences of the free monoid $\{a,b\}^*$.}

\subsection{Indecomposable permutations, regular right congruences and subgroups of the free group}
A right congruence of a monoid $\mathcal M$ 
is {\em regular} if right multiplication $\mathcal M\ni x\longmapsto
xa$ by an arbitrary element $a\in \mathcal M$ 
induces a bijection on the quotient. (It is of course enough to consider 
right-multiplications by elements in a set of generators.)
Regular right congruences are {\em right-simplifiable} congruences, meaning that $uw\equiv vw$ implies $u\equiv v$. The two properties are equivalent when the index is finite. (Right-multiplication on classes of 
right-simplifiable congruences of infinite index 
induce injections which are in general not bijective as shown by the 
example $\mathcal M=\{a,b\}$ endowed with the right-simplifiable congruence
defined by $u\equiv v$ if $a^*u\cap a^*v\not=\emptyset$. Indeed, 
right-multiplication by $b$ fails to yield the class $a^*$ represented by the
empty word.)

Regular right congruences of index $n$ in a free monoid are in bijection with subgroups of index $n$ of the free group on the same alphabet. Such a subgroup $H$ gives indeed rise to the right congruence $u\equiv v$ if and only if $Hu=Hv$. This yields the desired mapping from the set of subgroups of index $n$ onto the set of regular right congruences of the same index. Conversely, a regular right congruence of index $n$ defines a right action by bijections of the free monoid on the quotient set. This action extends uniquely to a transitive action of the free group, and the stabilizer of the class of the neutral element is a subgroup of index $n$. This gives the bijection.

This bijection has the following classical topological interpretation:  
Subgroups of the free group $\langle a,b\rangle$ generated by $a$ and $b$ (the case of arbitrary free groups is analogous) correspond to isomorphism classes of connected coverings
$(\tilde \Gamma,v_*)$ with a marked base-vertex $v_*$ of the connected graph $\Gamma$
given by of two oriented loops labelled $a$ and $b$ attached 
to a unique common vertex.
The fundamental group of $\Gamma$ is of course the free group $\langle a,b\rangle$ consisting of all reduced words in $\{a^{\pm 1},b^{\pm 1}\}^*$.
The  fundamental group of $(\tilde \Gamma,v_*)$ is the 
subgroup of all elements in $\langle a,b\rangle$ which
lift to closed paths of $\tilde \Gamma$ starting and ending at $v_*$.
If $\tilde \Gamma$ is finite (or more generally if the right actions 
of the cyclic groups $\langle a\rangle$ and $\langle b\rangle$ 
on the right cosets of $\pi_1(\tilde \Gamma,v_*)$ have only finite orbits), 
one can join an arbitrary initial vertex $\alpha$ of $\tilde \Gamma$ to an arbitrary final vertex $\omega$ by a path 
corresponding to a word in the free monoid $\{a,b\}^*$, 
ie. by a path using only positively oriented edges. 
In particular, such a graph $\tilde \Gamma$ has a canonically defined spanning tree
$P=\cup_{v\in V(\tilde \Gamma)}p_v$ where $p_v$ is the lexicographically smallest path
labelled by a word in $\{a,b\}^*$ which joins the marked vertex $v_*$ of $\tilde \Gamma$ to a given vertex $v$ of $\tilde \Gamma$. The set 
$P$ is prefix-closed and the remaining set of labelled oriented edges in 
$\tilde \Gamma$ defines a regular right congruence. The corresponding prefix-free set $C=P\{a,b\}\setminus P$ indexes
a free generating set of $\pi_1(\tilde \Gamma,v_0)$ by associating
the generator $cp_c^{-1}$ to every element $c$ where $p_c$ is the unique
representant $p_c\in P$ defining the same vertex as $c$ in $\tilde \Gamma$.  

Subgroups of finite index of a free group were first counted by Marshall Hall Jr. \cite{H}. 
The values for the number $c_n$ of subgroups of index $n$ in the free group $F_2=\langle a,b\rangle$ on two generators, or equivalently for the number of regular right congruences of index $n$ in the free monoid $\{a,b\}^*$, are $1,3,13,71,461, 3447$ for $n=1,2,3,4,5,6$, see \cite{OEIS}, sequence A3319. Remarkably, $c_n$ is equal to the number of indecomposable permutations in $S_{n+1}$. The symmetric group $S_3$ for example contains $3$ indecomposable permutations given by  $312,213,321$ and the $3$ subgroups of index 2 in $\langle a,b\rangle$ are $\langle aa,ab,ba\rangle, \langle a, bab, bb\rangle, \langle aa,aba,b\rangle$. A first bijection between the set of subgroups of index $n$ of the free group $\langle a,b\rangle$ on two generators and indecomposable permutations in $S_{n+1}$ was given by Dress and Franz \cite{DF1}. Other bijections were discovered later by Sillke \cite{Si}, Ossona de Mendez and Rosenstiehl \cite{OR}, and Cori \cite{Cr}.

\subsection{Another bijection}

We describe a further bijection, useful for proving Theorem \ref{mainr}. Since a regular right congruence $\equiv$ of 
$\{a,b\}^*$ is a particular case of a right congruence, it can be described 
by a triplet $(C,P,f)$ as in Section \ref{congr}. Regularity,
equivalent to bijectivity of the right action on the quotient 
represented by $P$, implies that we have $f(C_a) \subset P_b$ and $f(C_b)\subset P_a$, where, as previously, $C_l=C\cap\{a,b\}^*l$ and $P_l=P\cap(\{a,b\}^*l\cup\{1\})$
for $l$ in $\{a,b\}$.
Indeed, $f(ua)=va$ for $ua\in C_a$ and $va\in P$
implies $u\equiv v$ by right simplification in contradiction with the
fact that elements of $P$ represent non-equivalent classes.
The case of $f(ub)=vb$ is ruled out similarly.

Moreover, 
the inequality $f(c)<c$ for any $c\in C$ implies recursively that 
$f(c)=\mu_a(c)$ for any $c\in C_a$. This shows in particular that 
$f$ induces a bijection from $C_a$ onto $P_b$. 
For instance, looking at the running example (see Figure \ref{pfset}, noticing that the alphabetical order is read there by turning counterclockwise around the tree, starting from the root), we must have $f(a^2)<a^2$ and $f(a^2)\in P_b$, hence $f(a^2)=1$; then $f(ba^2)<ba^2$ and $f(ba^2)\in P_b$, hence $f(ba^2)=b$; similarly, $f(b^2a)=b^2$. 

The restriction of $f$ to $C_b$ determines thus the regular 
right congruence $\equiv$ completely. This restriction
is a bijection from $C_b$ onto $P_a$: Indeed, the two sets have the same cardinality. Moreover $f$ is injective, since if $ub,vb\in C_b$ and $f(ub)=f(vb)$, then $ub\equiv vb$ so that by regularity $u\equiv v$, then $u=v$ since $P$ is a set of representants of the quotient, and finally $ub=vb$.

Since the intersection $P_a\cap P_b$ is reduced to the empty word $1$, the map
$f$ from $C$ to $P$ is almost a bijection: it is surjective and 
each element of $P$ has a unique preimage, except the empty word which has exactly two preimages:  
a unique preimage $a^k=a^*\cap C_a$ in $C_a$ and a unique preimage 
$f^{-1}(1)\cap C_b$ in $C_b$.

We introduce now the set $\tilde P=P\cup \{a^{-1}\}$ and we consider the bijection $\phi$ from $C$ onto $\tilde P$ which coincides with $f$ except that $\phi(a^k)=a^{-1}$ where $a^k=a^*\cap C$. 

The twisted alphabetical order is extended to $\{a,b\}^*\cup\{a^{-1}\}$ by the rule: $a^i\prec a^{-1}\prec w$ for any $i\geq 0$ and for any word $w\in a^*b\{a,b\}^*$ involving $b$.

Writing $C=\{c_1 < c_2 < \ldots < c_{n+1}\}$ and $\tilde P=\{p_1\prec p_2\ldots \prec p_{n+1}\}$ we get a unique permutation $\theta\in S_{n+1}$ such that $\theta(i)=j$ if $\phi(c_i)=p_j$. 

Note that the twisted alphabetical order $\prec$ and the alphabetical order $<$ coincide on the prefix-free set $C$.

\begin{theorem}\label{bijection}
The map $\equiv \, \mapsto \theta$ is a bijection from the set of regular right congruences on $\{a,b\}^*$ into $n$ classes
onto the set of indecomposable permutations in $S_{n+1}$.
\end{theorem}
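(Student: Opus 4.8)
The plan is to use Lemma~\ref{left-to-right} as the organizing principle: I would locate the left-to-right maxima of $\theta$, read off the tree-theoretic data they encode, and match it against $(C,P)$ and $f$. Recall that $\phi$ agrees with the map $\mu$ of Section~\ref{sectprefixfreeclosed} on $C_a$ (up to replacing the image $1$ of $a^k$ by $a^{-1}$), whereas on $C_b$ it is the \emph{free} part $f$ of the congruence, with values in $P_a$. The first goal is to show the map lands in $\mathrm{Indec}_{n+1}$; the second is bijectivity, which I would get by reconstructing $(C,P,f)$ from $\theta$.

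\textbf{The image is indecomposable.} I would first prove that the left-to-right maxima of $\theta$ occur exactly at the positions $i_1<\dots<i_k$, the alphabetical ranks of $C_a$ in $C$, with $\phi(c_{i_1})=a^{-1}$ and $\phi(c_{i_s})=w_s:=\mu_a(c_{i_s})\in P_b\setminus\{1\}$ for $s\ge 2$. The key point is that every value $\phi(d)$ with $d<c_{i_j}$ satisfies $\phi(d)\prec\phi(c_{i_j})$. For $d\in C_a$ this is Lemma~\ref{prop} applied to $\mu$; for $d\in C_b$ one has $\phi(d)=f(d)\in P_a$ with $f(d)<d\le c_{i_j}$, and I would deduce $f(d)\prec w_j$ from the sub-claim that an element of $P_a$ lying alphabetically below some $c\in C_a$ is $\prec\mu_a(c)$ in the twisted order — a short consequence of Lemma~\ref{lemproptwistorder}(ii),(iii) together with the fact that $w_j$ is the $\prec$-top of its equivalence class. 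The same comparison shows each $C_b$-position is dominated by an earlier $C_a$-value, so the maxima are \emph{exactly} at $C_a$.

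\textbf{Reading off the data and verifying indecomposability.} Grouping $\tilde P$ by left branches, the branch of $w_s$ is a $\prec$-interval with top $w_s$ (resp.\ $a^{-1}$ for $s=1$), so the $\prec$-rank of $\phi(c_{i_j})$ is $1+\sum_{s\le j}l_s$, where $l_s$ is the length of the $s$-th left branch; that is, $\theta(i_j)=1+\sum_{s\le j}l_s$. On the other hand $i_{j+1}=(j+1)+\#\{c\in C_b:\,c<c_{i_{j+1}}\}$. After cancelling, the criterion $\theta(i_j)\ge i_{j+1}$ of Lemma~\ref{left-to-right} reduces to
$$
\#\{c\in C_b:\ \mu_b(c)\text{ lies in one of the first }j\text{ left branches}\}\ \ge\ \#\{c\in C_b:\ c<c_{i_{j+1}}\},
$$
which holds because Lemma~\ref{prop} shows the right-hand set is contained in the left-hand one: if $c<c_{i_{j+1}}$ then its largest $C_a$-lower bound lies among the first $j$ branches, whence $\mu_b(c)\preceq$ that branch's top. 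Together with the routine check $\theta(i_k)=n+1$, Lemma~\ref{left-to-right} yields indecomposability.

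\textbf{Bijectivity.} Injectivity is then immediate: from $\theta$ one recovers $i_\bullet$ and the values $\theta(i_j)$, hence the branch lengths $l_s=\theta(i_s)-\theta(i_{s-1})$ (with $\theta(i_0):=1$); by Lemma~\ref{charactree} the pair $(i_\bullet,l_\bullet)$ determines the tree $(C,P)$, and the remaining values of $\theta$ determine $f$ on $C_b$ as the $P_a$-element of the prescribed $\prec$-rank. For surjectivity I would run this reconstruction on an arbitrary indecomposable $\theta$: indecomposability forces $\theta(1)\ge 2$, and the strict increase of the maxima values makes every $l_s\ge 1$. The hard part, and the main obstacle, is to show the length/rank data is genuinely \emph{realizable} by a complete binary tree and that the recovered $f$ satisfies $f(c)<c$, i.e.\ that the reconstruction succeeds precisely when $\theta$ is indecomposable; I expect the inequalities $\theta(i_j)\ge i_{j+1}$ to be exactly what guarantees realizability. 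A clean alternative that sidesteps this verification is to invoke the already-recorded equalities between the number of regular right congruences of index $n$, the number of index-$n$ subgroups of $F_2$ (Hall), and $|\mathrm{Indec}_{n+1}|$ (Comtet): a well-defined injection into a finite set of equal cardinality is automatically a bijection.
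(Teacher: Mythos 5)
Your route is, in all essentials, the paper's own: you place the left-to-right maxima exactly at the ranks of $C_a$ using Lemma~\ref{prop} and the twisted-order lemmas, verify the criterion of Lemma~\ref{left-to-right} by a counting inequality, prove injectivity by reconstructing $(C,P,f)$ from the positions/values of the maxima via Lemma~\ref{charactree} together with the standard permutation, and your ``clean alternative'' for surjectivity — a well-defined injection between finite sets of equal cardinality (Hall, Comtet) — is precisely what the paper does, with the direct inverse you label the hard part relegated there to a short closing remark. So the obstacle you worry about in the surjectivity step is not a real one within this proof strategy.

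There is, however, an off-by-one gap in your verification of indecomposability. With $s_j=l_1+\cdots+l_j$ one has $\theta(i_j)=s_j+1$ and $i_{j+1}=(j+1)+m_j$, where $m_j$ is the number of $c\in C_b$ with $c<c_{i_{j+1}}$; so the criterion $\theta(i_j)\ge i_{j+1}$ is equivalent to $m_j\le s_j-j$. But your left-hand set, the preimage under the bijection $\mu_b\colon C_b\to P_a$ of the $P_a$-points of the first $j$ left branches, has $l_1+(l_2-1)+\cdots+(l_j-1)=s_j-j+1$ elements, because the empty word belongs to $P_a$ and sits in the first branch. Thus your displayed inequality is weaker by one than the criterion, and the containment you derive from Lemma~\ref{prop} proves only that weaker bound. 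The criterion can be tight: in the running example of Figure~\ref{pfset}, $j=1$ gives $\theta(1)=3=i_2$ and $m_1=1=s_1-1$, whereas your left set $\{ab,b^3\}$ has two elements. The repair is immediate: the containment is always \emph{strict}, since the all-$b$ word $b^\beta\in C_b$ satisfies $\mu_b(b^\beta)=1$, which lies in the first branch, yet $b^\beta$, being the alphabetical maximum of $C$, is never below $c_{i_{j+1}}$; this single witness restores $m_j\le s_j-j$. Note that this is exactly the role $b^\beta$ plays in the paper's version of the count, where the inequality comes from the map $\mu$, injective precisely on $C\setminus\{b^\beta\}$. With that one observation added, your argument is complete and coincides with the paper's proof.
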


We illustrate Theorem \ref{bijection} by considering the sets $C,P$ together with the right congruence defined by $a^2\equiv 1, ba^2\equiv b,b^2 a\equiv b^2,ab\equiv1,bab\equiv ba,b^3\equiv a$ in our running example. We have $C=\{a^2<ab<ba^2<bab<b^2a<b^3\}$ and $f(a^2)=1,f(ba^2)=b,f(b^2a)=b^2$ and $f(ab)=1,f(bab)=ba,f(b^3)=a$. Therefore, with $\tilde P=\{a\prec 1 \prec a^{-1}\prec ba\prec b\prec b^2\}$, we have $\phi=f$ except that $\phi(a^2)=a^{-1}$. Therefore $\phi$ is the bijection $C\rightarrow \tilde P$ 
represented by the following array of two rows:
$$
\phi=\left(\begin{array} {cccccc}a^2&ab&ba^2&bab&b^2a&b^3\\a^{-1}&1&b&ba&b^2&a\end{array}\right).
$$
Replacing words of $\phi$ by their respective position for the lexicographical order $a^2<ab<ba^2<bab<b^2a<b^3$ 
on the prefix-free set $C$, respectively for the twisted lexicographical order 
$a<1<a^{-1}<ba<b<b^2$ on $\tilde P$, 
we get the permutation
$$
\theta=\left(\begin{array} {cccccc}1&2&3&4&5&6\\3&2&5&4&6&1\end{array}\right).
$$
\begin{proof} 
We prove first that the permutation $\theta$ (associated to a
regular right congruence with $n$ classes in $\{a,b\}^*$) is 
an indecomposable element of $S_{n+1}$.
The equivalence class $\mu_a(f(c))a^*$ of an element
$c$ in $C_b$ intersects $C_a$
in a unique element $c'$ which is lexicographically smaller
than $c$. Since 
$$\phi(c')=\left\lbrace\begin{array}{ll}
\mu_a(c')\qquad &\hbox{if }c'\not\in a^*\\
a^{-1}&\hbox{if }c'=C_a\cap a^*
\end{array}\right.$$ 
is the maximal element (with respect to the twisted order, extended to $\tilde P$) in the equivalence
class of $f(c)$,  
left-to-right maxima of $\theta$ correspond to 
a subset of $C_a$. The equality $f(c)=\mu_a(c)$
implies that all elements of $C_a$ define left-to-right maxima.
We apply now Lemma \ref{left-to-right} for proving 
indecomposability of $\theta$ as follows:
Given an element $c\in C_a$, the element $\phi(c)$
is always the largest element with respect to the twisted order
of the set 
$$\tilde P(< c)=\{p\in \tilde P,p< c\hbox{ lexicographically}\}\subset
P\cup\{a^{-1}\}$$
where $a^{-1}$ is by convention the lexicographically smallest element
of $\tilde P$. Indecomposability of $\phi$ amounts thus by Lemma
\ref{left-to-right} to the inequality
\begin{eqnarray}\label{ineqCP}
\vert C(\leq c)\vert&<&\vert \tilde P(<c)\vert
\end{eqnarray}
for all $c\in C_a$ where
\begin{eqnarray*}
\tilde P(< c)&=&\{p\in \tilde P,p< c\hbox{ lexicographically}\},\\
C(\leq c)&=&\{c'\in C_a,c'\leq c\hbox{ lexicographically}\}.
\end{eqnarray*}
The identity
$$\vert \tilde P(<c)\vert=\vert P(<c)\vert+1$$
where 
$$P(< c)=\{p\in P,p< c\hbox{ lexicographically}\}=\tilde P(<c)\setminus\{a^{-1}\}$$
shows that the strict inequality (\ref{ineqCP}) amounts to 
$$\vert C(\leq c)\vert\leq \vert P(<c)\vert$$
for all $c\in C_a$. This inequality
holds since $\mu$ restricts to an injection 
from $C(\leq c)$ into $P(<c)$ for all $c$ in $C\setminus(C\cap b^*)$,
as observed in Section \ref{sectprefixfreeclosed}.

Thus we have a map associating an indecomposable permutation $\theta$ 
to every regular right congruence $\equiv$. It is now enough to establish
injectivity of this map. Surjectivity follows then from the known equicardinality of the two involved sets. 
The cardinality of $C_a$ equals the number of left-to-right maxima of $\theta$. Let $\theta$ have successive left-to-right maxima in positions $i_1,\ldots,i_k$, with values $j_1,\ldots,j_k$. The $i_h$ are the ranks in the totally ordered set $C$ of the elements of $C_a$. The lengths $l_i$ of the left branches of $P$ are determined by the differences between the values of two successive such maxima: $l_1=j_1-1$ and $l_i=j_i-j_{i-1}$ if $i\geq 2$. 
By Lemma \ref{charactree}, the tree defined by the maximal prefix-free set $C$ 
and its associated prefix-closed set $P$ are thus
completely determined by the numbers $i_1,\dots,i_k$ and $j_1,\dots,j_k$. 
From $C$ and $P$ we immediately recover the function $f$ on $C_a$. The bijection
$f:C_b\longrightarrow P_a$ is encoded by the standard permutation $st(\theta)$
(as defined in Lemma \ref{ptheta} of Section \ref{sectLRmaxima})
of $\theta$. The equivalence relation $\equiv$ is thus 
completely determined by $\theta$.
\end{proof}

{\bf Remark.} It is not difficult to invert the map $\equiv\  \mapsto \theta$
of Theorem \ref{bijection}. Indeed, positions 
and values of left-to-right maxima of an indecomposable permutation 
$\theta\in S_{n+1}$ determine a unique maximal prefix-free set $C$ having
$n+1$ elements. The associated standard permutation $st(\theta)$
encodes a regular right congruence given by a suitable map from $C$
into the set $P$ of all proper prefixes of $C$. This avoids 
equicardinality results and
gives a bijective proof of Theorem \ref{bijection}.

\subsection{Fixing $C$ and $P$.}\label{FixCP}
We fix a finite maximal prefix-free set $C$ of $n+1$ elements in $\{a,b\}^*$ 
with $C_a=C\cap\{a,b\}^*a$ containing $k$ elements.
Consider the set of all regular right congruences into 
$n$ equivalence classes with 
lexicographically smallest representants given by the prefix-closed set $P$
associated to $C$. Theorem \ref{bijection}
gives by restriction a bijection between the set of these congruences and the set of bijections $\alpha:C_b\rightarrow P_a$ satisfying $\alpha(c)<c$: We have indeed $\alpha(c)=\phi(c)=f(c)<c$ for any $c\in C_b$. Since $C_b=C\setminus C_a$ 
has $n+1-k$ elements and is 
totally ordered by $<$ and since $P_a$ is totally ordered by $\prec$, the 
bijection $\alpha$ is naturally associated to a permutation $\sigma$
of $S_{n+1-k}$. This permutation is simply the standard permutation of the indecomposable permutation $\theta$ (encoding a regular right representation with $n$ classes). 
It is obtained from $\theta$, viewed as a word, by removing the values of all left-to-right maxima, see Section \ref{sectLRmaxima}. As already observed earlier, 
positions and values of left-to-right maxima of the permutation $\theta$
encode the underlying finite maximal prefix-free set $C$. 

We use assertion (ii) of Lemma \ref{lemproptwistorder}
for ordering (alphabetically) left branches of $P$. We denote 
by $l_1,\ldots,l_k$ the corresponding lengths and we set $s_i=l_1+\cdots +l_i$. 

We observe for later use the following facts: If $i_1,\ldots,i_k$, $j_1,\ldots, j_k$ are as in Lemma \ref{ptheta} applied to a permutation in $S_{n+1}$, 
then the previous proof implies
$C_a=\{c_{i_1}<\ldots<c_{i_k}\}$ and $\{p_{i_1}\prec \ldots \prec p_{i_k}\}=\{\phi(c_{i_1})\prec \ldots \prec \phi(c_{i_k})\}$. Hence $j_h$ is the rank of $\phi(c_{i_h})$ in the set $\tilde P$ ordered by $\prec$. Observe that $\phi(c_1)=a^{-1}$ and $\phi(c_{i_h})=f(c_{i_h})=\mu_a(c_{i_h})$ if $h\geq 2$. Thus $j_1=l_1+1=s_1+1$ and, more generally, $j_h=s_h+1$ for any $h=1,\ldots,k$.
Lemma \ref{ptheta}, with $n$ replaced by $n+1$, shows
$$p(\theta)=p(\sigma)+(n+1)k-\frac{k(k+1)}{2}-\sum_hi_h+\sum_hs_h+k$$
which simplifies to
\begin{eqnarray}\label{formulaptheta}
&p(\theta)=p(\sigma)+(n+1)k-\frac{k(k-1)}{2}-\sum_hi_h+\sum_hs_h.
\end{eqnarray}

\section{Right ideals in $\mathbb F_q\langle a,b\rangle$.}\label{CPalpha}

It follows from \cite{R} (see also \cite{BR}, Proposition 7.1) that the set of right ideals of codimension $n$ of the free non-commutative associative algebra 
$\mathbb F_q\langle a,b\rangle$ over $\mathbb F_q$ generated by $a$ and $b$
is in bijection with the set of triplets $(C,P,(\alpha_{c,p}))$, where $C$ is a finite maximal prefix-free set with associated prefix-closed set $P$, with $P$ of cardinality $n$ and $C$ of cardinality $n+1$, and where $(\alpha_{c,p})$ is a family of elements in $\mathbb F_q$ with $c\in C$, $p\in P$ and $p<c$ for the alphabetical order.

In this case the right ideal $I$ is generated by the polynomials $c-\sum_{p<c}\alpha_{c,p}p$. These polynomials are in fact free generators of the right $\mathbb F_q\langle a,b\rangle$-module $I$. Moreover, the elements of $P$ are representatives of an $\mathbb F_q$-basis
of the quotient $I\backslash\mathbb F_q\langle a,b\rangle$ and the right action of $\mathbb F_q\langle a,b\rangle$ on the quotient is completely defined by 
$$p.x=\left\lbrace\begin{array}{cl}px\qquad&\hbox{if }px\in P\\
\sum_{q<c}\alpha_{c,q}q\qquad&\hbox{if }c=px\in C\end{array}\right.$$
 where $p$ is in $P$ and $x\in\{a,b\}$ is a letter of the alphabet.

The matrices $\mu(a),\mu(b)$ of the right action of $a$ and $b$ 
with respect to the basis $P$ of $I\backslash\mathbb F_q\langle a,b\rangle$
are therefore $P\times P$ matrices with coefficients $\mu(x)_{p,q}$
defined by
\begin{eqnarray}\label{formulamuxpq}
\mu(x)_{p,q}&=&\left\lbrace\begin{array}{cl}
1\qquad&\hbox{if }px=q\\
0\qquad&\hbox{if }px\in P,\ px\neq q\\
\alpha_{c,q}\qquad&\hbox{if }px=c\in C,\ q<c\\
0\qquad&\hbox{if }px=c\in C,\ q>c.\end{array}\right.
\end{eqnarray}

\section{Right ideals in $\mathbb F_q \langle 
a,b,a^{-1},b^{-1}\rangle$.}

A right ideal $I$ of codimension $n$ in  $\mathbb F_q\langle a,b,a^{-1},b^{-1}\rangle$ determines a right ideal $I\cap \mathbb F_q\langle a,b\rangle$ of codimension $n$ in $\mathbb F_q\langle a,b\rangle$.
Right ideals in $\mathbb F_q\langle a,b\rangle$ determined in this way by right ideals
in $\mathbb F_q\langle a,b,a^{-1},b^{-1}\rangle$ are right ideals $J$ of $\mathbb F_q\langle a,b\rangle$ such that the actions of $a$ and $b$ on the quotient
$J\backslash \mathbb F_q\langle a,b\rangle$ 
are both linear isomorphisms. We obtain in this way a bijection between the set of right ideals of codimension $n$ in $\mathbb F_q\langle a,b,a^{-1},b^{-1}\rangle$ and the set of right ideals of codimension $n$ in $\mathbb F_q\langle a,b\rangle$ such that $a$ and $b$ act both bijectively on the quotient.

\section{q-Count with fixed prefix-free set $C$}\label{sectq-count}

We consider a fixed maximal prefix-free set $C$ of cardinality $n+1$ 
in $\{a,b\}^*$ with associated prefix-closed set $P$ of cardinality $n$. We count all right ideals of codimension $n$ in $\mathbb F_q\langle a,b\rangle$
such that right-multiplication by $a$ and right-multiplication by $b$ induce bijections of the quotient. Such right ideals correspond to triplets $(C,P,(\alpha_{c,p}))$ with $\alpha_{c,p}$ encoding two invertible
matrices $\mu(a)$ and $\mu(b)$ by Formula (\ref{formulamuxpq}) of Section \ref{CPalpha}.

\subsection{Counting the matrices $\mu(a)$.}\label{mua}
The definition of $\mu(a)$ shows that this matrix has a lower triangular block decomposition, with blocks ordered as in Subsection \ref{FixCP} and with block-sizes equal to the lengths $l_i$ of the left branches of $P$. Moreover, each diagonal block is a companion matrix of size $l_i\times l_i$, $i=1,\ldots, k$.
 Strictly lower triangular blocks are filled with 0's except for their 
last row which is arbitrary. In other words, only rows of index $s_1,s_2,\dots$ have some freedom: The first $s_i$ entries of row $s_i$ are arbitrary, except that one of them (in column $s_{i-1}+1$) must be nonzero. Thus there are $(q-1)q^{l_i-1}$ possible choices for the $i$-th diagonal block. This amounts to
$(q-1)^kq^{N}$ possibilities for the matrix $\mu(a)$, where 
\begin{eqnarray}\label{formulaN}
N&=&\sum_{i=1,\ldots,k}(s_i-1)=\sum_{i=1,\ldots,k}s_i-k
\end{eqnarray}
and where $s_i=l_1+\dots+l_i$ is the rank in $P$
of the unique element $p_{s_i}$ in $P$ such that $p_{s_i}a$ is the $i-$th
smallest element of $C_a$.

In our running example given by Figure \ref{pfset}, the matrix $\mu(a)$ is of 
the form
$$
\begin{array}{ccccccccc}
&&1&a&b&ba&b^2\\
&&&&&&\\
1&& 0&1 &0&0&0 \\
a&& *&\times &0&0&0\\
b&& 0&0&0&1&0 \\
ba&&  \times&\times& *&\times &0\\
b^2&& \times&\times&\times&\times&*   
\end{array}
$$
where $*$ represents a nonzero element of the field, whereas $\times$ is any element. This matrix is block-triangular, with 3 diagonal blocks, of size $l_1=2, l_2=2, l_3=1$. There are $(q-1)^3q^8$ such matrices, as predicted by the formula
for $k=3$ and $s_1=l_1=2,s_2=l_1+l_2=4,s_3=l_1+l_2+l_3=5$ leading to $N=(s_1-1)+(s_2-1)+(s_3-1)=1+3+4=8$.

\subsection{Counting the matrices $\mu(b)$.}
Define the auxiliary order $<_b$ by $p<_b q$ if $pb<qb$. We order the rows of $\mu(b)$ with $<_b$ and its columns by $\prec$. 

We consider the partition $\lambda$ with parts 
\begin{eqnarray}\label{formulalambdac}
\lambda_c&=&\vert \{q\in P_a,q<c\}\vert
\end{eqnarray}
indexed by elements
$c$ in $C_b$. A part $\lambda_c$ indexed by $c\in C_b$  is thus defined as the
number of lower bounds of $c$ in $P_a$. Then for any $p,p'\in P$ such that  $c=pb,c'=p'b\in C$ and $p<_b p'$, one has $\lambda_c\leq \lambda_{c'}$, since $c<c'$. Observe that $\lambda$ has $\vert C_b\vert$ parts
and a largest part (indexed by the unique 
element $b^h$ of $C\cap b^*$) of length $\vert C_b\vert$, since $C_b$ is in bijection with $P_a$ and since each element in $P_a$ is $<b^h$. In our running example underlying Figure \ref{pfset}, we have $\lambda=2,3,3$, with
$\lambda_{ab}=2$ (corresponding to $1,a<ab$) and $\lambda_{bab}=\lambda_{b^3}=3$ (since $1,a,ba<bab,b^3$).

A row of $\mu(b)$ indexed by $p\in P$ such that $pb\in P$ has all coefficients zero except for a unique coefficient $1$ with column-index $pb$.
Possibly nonzero entries in column $pb$, other than the 1 in row $p$, are in the rows $q$ with $qb\in C$ and $pb<qb$, by definition of $\mu(b)$ in Section \ref{CPalpha}; they are located below row $p$. Their number is therefore equal to the number of $c\in C_b$ such that $pb<c$.

Removing from the matrix $\mu(b)$ all rows indexed by $p\in P$ such that $pb\in P$ and all columns indexed by $pb\in P_b\setminus\{1\}$, we get a matrix with rows indexed by $C_b$ (since $pb\notin P$ implies $pb\in C_b$) and columns indexed by $P_a$. This matrix is a square matrix of size $|C_b|\times|C_b|$ with nonzero entries contained in the set $E_\lambda$ defined at the beginning of Section \ref{thHag} for the partition $\lambda$ with parts $\lambda_c$ defined by (\ref{formulalambdac}). In other words,  $E_\lambda$ is the set of entries $(p,p')$, with $pb\in C_b$ and $p'\in P_a$, such that $p'<pb$.

Observe now that the matrix $\mu(b)$ is invertible if and only if the submatrix above is invertible, since all removed rows have exactly one non-zero
entry in distinct columns.

Thus, we obtain a total number $q^MH_\lambda(q)$ of possible matrices $\mu(b)$, where $M=|\{(p,c)\in (P_b\setminus \{1\})\times C_b,p<c\}|$ and where $H_\lambda$ is as in Section \ref{hag}. 

The matrix $\mu(b)$ of our running example looks like
$$
\begin{array}{ccccccccc}
&& a& 1& ba& b& b^2\\
&&&&&&\\
 a&& \times& \times & 0&0&0 \\
 1&&  0&0&0&1&0\\
 ba&& \times & \times& \times &\times&0\\
 b&&  0&0&0&0&1 \\
 b^2&&  \times &\times& \times&\times&\times
\end{array}
$$
The associated submatrix obtained by removing both rows and columns containing 1's is the leftmost matrix in the Figure of Section \ref{thHag}.

\section{Proof of Theorem \ref{mainr}}

The two preceeding sections show that the number of right ideals of index 
$n$ in $\mathbb F_q\langle a,a^{-1},b,b^{-1}\rangle$ is given by
$$A_n=\sum_C(q-1)^{k(C)}q^{N(C)}q^{M(C)}H_{\lambda(C)}(q)$$
where the sum is over all maximal prefix-free sets with $n+1$ elements 
in $\{a,b\}^*$ and where $H_\lambda(q)$ counts the number of invertible matrices
with support prescribed by a partition $\lambda$. The numbers $k(C),N(C),M(C)$
and the partition $\lambda(C)$ associated to a maximal prefix-free set $C$ are defined as in Section \ref{sectq-count}.

Applying Haglund's Theorem (Theorem \ref{hag}) we get
\begin{eqnarray*}
&A_n=(q-1)^{n+1}\sum_Cq^{N(C)+M(C)}\sum_{\sigma\in S(\lambda(C))}q^{p(\sigma)}
\end{eqnarray*}
where $S(\lambda(C))$ denotes the set of all permutations with permutation-matrices supported by the partition $\lambda(C)$.

Fixing $C$ we observe that the bijection $\alpha$ of subsection \ref{FixCP}, viewed as a matrix indexed by $C_b\times P_a$, has nonzero entries only in $E_\lambda$ (after identification of $C_b$ with the set of $p\in P$ such that $pb\in C$).
Moreover, we have seen that $\alpha$ may be identified with $\sigma$.
Using the definition $N=-k+\sum_{j=1}^k s_j$ (cf. Formula (\ref{formulaN}))
and the equality
$$M=-\sum_{j=1}^ki_j+(n+1)(k-1)+k-\frac{k(k-1)}{2}$$
given by Lemma \ref{numbers}, we have
\begin{eqnarray*}
A_n&=&(q-1)^{n+1}\sum_C\sum_\sigma q^{\sum_{j=1}^k(s_j-i_j)+(n+1)(k-1)-k(k-1)/2+p(\sigma)}\\
&=&(q-1)^{n+1}\sum_C\sum_\theta q^{p(\theta)-(n+1)}
\end{eqnarray*}
where the last identity is given by Formula (\ref{formulaptheta}) of Subsection \ref{FixCP} and where the second sum is over all possible permutations $\theta$
as in Subsection \ref{FixCP}. Since $\theta$ is necessarily indecomposable
and since an indecomposable permutation $\theta$ of $S_{n+1}$
determines $C$ uniquely,
the first sum can be dropped. This shows the equality
$$A_n=(q-1)^{n+1}\sum_{\theta\in \mathrm{Indec}_{n+1}} q^{p(\theta)-(n+1)}.$$
A comparision with 
formula (\ref{formulemainreq}) ends the proof.\hfill$\Box$

\section{Conclusion}
Our main result can also be interpreted as a cellular decomposition of the set of right ideals of codimension $n$ in $\mathbb F\langle a,a,a^{-1},b^{-1}\rangle$
over an arbitrary field $\mathbb F$. Cells are indexed by indecomposable permutations of $S_{n+1}$ and the cell corresponding to an indecomposable permutation $\theta$ in $S_{n+1}$ is isomorphic to $(\mathbb F^*)^{n+1}\times \mathbb F^{\frac{(n+1)(n-2)}{2}+inv(\theta)}$.

There exists perhaps an extension of our main result to the ring of Laurent polynomials in $g\geq 3$ variables. Indeed, one ingredient of our proof is a bijection between subgroups of index $n$ of the free group in 2 generators and indecomposable permutations in $S_{n+1}$ and Dress and Franz have generalized their bijection in \cite{DF1} to a bijection between subgroups of index $n$ of the free group in $g$ generators and systems of $g-1$ indecomposable permutations in $S_{n+1}$, see \cite{DF2}.

\end{document}